\theoremstyle{plain}
\newtheorem{theorem}{Theorem}[section]
\newtheorem{lemma}[theorem]{Lemma}
\newtheorem{proposition}[theorem]{Proposition}
\newtheorem{corollary}[theorem]{Corollary}
\theoremstyle{definition}
\newtheorem{definition}[theorem]{Definition}
\newtheorem{definitions}[theorem]{Definitions}
\newtheorem{example}[theorem]{Example}
\newtheorem{examples}[theorem]{Examples}
\newtheorem{remark}[theorem]{Remark}
\newtheorem{remarks}[theorem]{Remarks}
\newtheorem{notation}[theorem]{Notation}
\newtheorem{OpRel}[theorem]{Operations and Relations}
\newtheorem{PolInv}[theorem]{The Galois connection $\boldsymbol{\Pol-\Inv}$}
\newtheorem{EndgQuord}[theorem]{The Galois connection $\boldsymbol{\End-\gQuord}$}
\newtheorem{lattices}[theorem]{The lattices $\cK^{(m)}_{A}$}
\numberwithin{equation}{theorem} 
\newcommand{\New}[1]{\emph{#1}}
\DeclareMathOperator{\Sta}{Sta} 
\newcommand{\uclose}[1]{\overline{#1}} 
\DeclareMathOperator{\End}{End}
\DeclareMathOperator{\Con}{Con}
\DeclareMathOperator{\Pol}{Pol}
\DeclareMathOperator{\Inv}{Inv}
\DeclareMathOperator{\Op}{Op}
\newcommand{\Opa}[1][]{\Op^{(#1)}}
\DeclareMathOperator{\Rel}{Rel}
\newcommand{\Rela}[1][]{\Rel^{(#1)}}
\DeclareMathOperator{\Eq}{Eq} \DeclareMathOperator{\Equ}{Eq}
\DeclareMathOperator{\Quord}{Quord} 
\DeclareMathOperator{\gQuord}{gQuord} 
\newcommand{\gQuorda}[1][]{\gQuord^{(#1)}} 
\DeclareMathOperator{\pr}{pr}
\newcommand{\preserves}{\mbox{ $\triangleright$ }}
\newcommand{\id}{\mathsf{id}}
\DeclareMathOperator{\ar}{ar} 
\newcommand{\cK}{\mathcal{K}}
 \newcommand{\bv}{\mathbf{v}}
 \newcommand{\br}{\mathbf{r}}
 \newcommand{\bb}{\mathbf{b}}
\newcommand{\bc}{\boldsymbol{c}}
\newcommand{\N}{\mathbb{N}}
\newcommand{\Z}{\mathbb{Z}}
\newcommand{\rmI}{{\rm I}}
\newcommand{\rmII}{{\rm II}}
\newcommand{\rmIII}{{\rm III}}
\newcommand{\Sg}[2][]{\ensuremath{\langle #2 \rangle_{#1}}}
\newcommand{\rfl}{\mathsf{ref}}
\newcommand{\tra}{\mathsf{tra}}
\newcommand{\gqu}{\mathsf{gqu}}
\newcommand{\trl}[1]{\mathsf{trl}(#1)} 
\let\rho=\varrho
\let\epsilon=\varepsilon
\let\phi=\varphi
\let\kappa=\varkappa
\let \restrictionORIGINAL=\restriction
\renewcommand{\restriction}{\mathclose\restrictionORIGINAL}
\author{Danica Jakub{\'\i}kov\'a-Studenovsk\'a%
  \\ Institute of
  Mathematics\\ P.J. \v{S}af\'arik University\\ Ko\v sice
  (Slovakia)
\and Reinhard P\"oschel%
 \\Institute of Algebra\\TU Dresden (Germany)
\and S\'andor Radeleczki%
\\Institute of Mathematics\\ University of
    Miskolc (Hungary)}
\date{\footnotesize version July 4, 2023}
\title{Generalized quasiorders and\\ the Galois connection $\End-\gQuord$}
\begin{document}

\maketitle

\begin{abstract}
 Equivalence relations or, more general, quasiorders (i.e., reflexive
and transitive binary relations) $\rho$ have the property that an
$n$-ary operation $f$ preserves $\rho$, i.e., $f$ is a polymorphism of
$\rho$, if and only if each translation (i.e., unary polynomial function obtained from
$f$ by substituting constants) preserves $\rho$, i.e., it is an
endomorphism of $\rho$. We introduce a wider class of relations
-- called generalized quasiorders -- of arbitrary arities with the same
property. 
With these generalized quasiorders we can characterize all
algebras whose clone of term operations is determined by its
translations by the above property, what generalizes affine
complete algebras. The results are based on the characterization of
so-called u-closed monoids (i.e., the unary parts of clones with the
above property) as Galois closures of the Galois connection
$\End-\gQuord$, i.e., as endomorphism monoids of generalized
quasiorders. The minimal u-closed monoids are described explicitly.
\end{abstract}

\section*{Introduction}\label{introduction}

Equivalence relations $\rho$ have the
remarkable well-known property that an
$n$-ary operation $f$ preserves $\rho$ (i.e., $f$ is a polymorphism of
$\rho$) if and only if each translation, i.e., unary polynomial function obtained from
$f$ by substituting constants, preserves $\rho$ (i.e., is an
endomorphism of $\rho$). Checking the proof one
sees that symmetry is not necessary, thus the same property, called
$\Xi$ in this paper (see \ref{MX}), also holds for quasiorders,
 i.e., reflexive and transitive relations.

No further relations with property $\Xi$ were known and once we
came up with the interesting (for us) question, if there are other
relations (than quasiorders) which satisfy $\Xi$, we hoped to prove that
$\Xi(\rho)$ implies that $\rho$ has to be a quasiorder (or at least to
be ``constructible'' from quasiorders). This attempt failed, but a new
notion was born: \New{transitivity} of a relation with higher
arity. The next step was to investigate reflexive and transitive
$m$-ary relations which naturally are called \New{generalized
  quasiorders} for $m\geq 3$ (for $m=2$ they coincide with usual
(binary) quasiorders) and which all have the property $\Xi$
(Theorem~\ref{A1b}). Moreover, these generalized quasiorders are
more powerful than quasiorders or equivalence relations (see Remark~\ref{B3}) and therefore
allow finer investigations of the structure of algebras $(A,F)$.

The next challenging question was: are there further relations with property $\Xi$, other than
generalized quasiorders? The answer is ``yes, but not really'': there
are relations $\rho$ satisfying $\Xi(\rho)$ and not being a generalized
quasiorder (see Example in \ref{A3New}), but each
such relation $\rho$ is ``constructively equivalent'' to  generalized quasiorders in
the sense that they generate the same relational clone and therefore
can be expressed mutually by primitive positive formulas (Proposition~\ref{A3Xi}).

With the property $\Xi$ the clone $\Pol\rho$ of polymorphisms is completely
determined by the endomorphism monoid $M=\End\rho$. Changing the point
of view and starting with an arbitrary monoid $M\leq A^{A}$ of unary
mappings, one can ask for the set $M^{*}$ of all operations whose
translations belong to $M$. Then $\Xi(\rho)$ means
$\Pol\rho=(\End\rho)^{*}$ (for details see Section~\ref{sec0}), in
particular, $M^{*}$ is a clone. But in general, $M^{*}$ is only a
so-called preclone (counterexample~\ref{M3A}). This leads to the question \textsl{When $M^{*}$ is a
clone?} and to the notion of a
\New{u-closed monoid} (namely if $M^{*}$ is a clone).

These u-closed monoids play a crucial role in this paper. Their
characterization via generalized quasiorders, namely as Galois closed monoids (of the Galois connection
$\End-\gQuord$ introduced in Section~\ref{secB}), is one of the main
results (Theorem~\ref{A3}) from which the 
answer to all above questions more or less follows.

The paper is organized as follows. All needed notions and notation are
introduced in \textsl{Section~\ref{prelim}}. \textsl{Section~\ref{sec0}} deals with the property $\Xi$ and the
u-closure and clarifies the preclone structure of
$M^{*}$. \textsl{Section~\ref{secA}} is the stage for the main player of this
paper: the generalized quasiorders. In particular, Theorem~\ref{A1b}
proves the property $\Xi$ for them. As already mentioned, in
\textsl{Section~\ref{secB}} the Galois connection $\End-\gQuord$ and
the crucial role of u-closed monoids is considered. Moreover, the
behavior of the u-closure under taking products and
substructures is clarified. In \textsl{Section~\ref{secC}} we consider
the u-closure of concrete monoids $M\leq A^{A}$, in particular all
minimal u-closed monoids are determined (Theorem~\ref{B0min}). In
\textsl{Section~\ref{secD}} we collect some facts and problems for
further research. In particular we show how the notion of an affine
complete algebra can be generalized via generalized quasiorders.


\section{Preliminaries}\label{prelim}

In this section we introduce (or recall) all needed notions and
notation together with some results. Throughout the paper, $A$ is a finite, nonempty
set. $\N:=\{0,1,2,\dots\}$ ($N_{+}:=\N\setminus\{0\}$) denotes the set of
(positive) natural numbers.

\begin{OpRel}\label{OpRel}
  Let $\Opa[n](A)$ and $\Rela[n](A)$ denote the set of all $n$-ary
  operations $f:A^{n}\to A$ and $n$-ary relations $\rho\subseteq
  A^{n}$, $n\in\N_{+}$, respectively. Further, let
  $\Op(A)=\bigcup_{n\in \N_{+}}\Opa[n](A)$ and  
   $\Rel(A)=\bigcup_{n\in \N_{+}}\Rela[n](A)$. 

The so-called \New{projections}
$e^{n}_{i}\in\Opa[n](A)$ are defined by
$e^{n}_{i}(x_{1},\dots,x_{n}):=x_{i}$ ($i\in\{1,\dots,n\}$,
$n\in\N_{+}$). The identity mapping is denoted by $\id_{A}$
($=e^{1}_{1}$).

$C:=\{\bc_{a}\mid a\in  A\}$ is the set of all \New{constants}, 
  considered as unary
   operations given by $\bc_{a}(x):=a$ for $a\in A$. 

Special sets of relations are $\Equ(A)\subseteq\Quord(A)$ and $\Quord(A)\subseteq\Rela[2](A)$ of all
\New{equivalence relations} (reflexive, symmetric and transitive) and
\New{quasiorder relations} (reflexive and transitive), respectively,
on the set $A$.

For $f\in\Opa[n](A)$ and $r_{1},\dots,r_{n}\in A^{m}$,
$r_{j}=(r_{j}(1), \dots, r_{j}(m))$,
($n,m\in\N_{+}$, $j\in\{1,\dots,n\}$),
let $f(r_{1},\dots,r_{n})$ denote the $m$-tuple obtained from
componentwise application of $f$, i.e., the $m$-tuple
$(f(r_{1}(1),\dots,r_{n}(1)),\dots,f(r_{1}(m),\dots,r_{n}(m)))$.

For $f\in\Opa[n](A)$ and $g_{1},\dots,g_{n}\in\Opa[1](A)$, the
\New{composition} $f[g_{1},\dots,g_{n}]$ is the unary operation given by
$f[g_{1},\dots,g_{n}](x):=f(g_{1}(x),\dots,g_{n}(x))$, $x\in A$.
\end{OpRel}

\begin{PolInv}\label{PolInv} An operation $f\in\Opa[n](A)$
  \New{preserves} a relation $\rho\in\Rela[m](A)$ ($n,m\in\N_{+}$) if
for all $r_{1},\dots,r_{n}\in\rho$ we have
  $f(r_{1},\dots,r_{n})\in\rho$, notation $f\preserves\rho$.

 The
Galois connection induced by $\preserves$ gives rise to
several operators as follows. For $Q\subseteq\Rel(A)$ and
$F\subseteq\Op(A)$ let
\begin{align*}
  \Pol Q&:=\{f\in\Op(A)\mid \forall \rho\in Q: f\preserves\rho\}
          &\text{ (\New{polymorphisms}),}\\
  \Inv F&:=\{\rho\in\Rel(A)\mid \forall f\in F: f\preserves\rho\} 
&\text{ (\New{invariant relations}),}\\
  \End Q&:=\{f\in\Opa[1](A)\mid \forall \rho\in Q: f\preserves\rho\} 
&\text{ (\New{endomorphisms}),}\\
\Con F&:=\Con(A,F):=\Inv F\cap\Eq(A)&\text{ (\New{congruence relations}),}\\
\Quord F&:=\Quord(A,F):=\Inv F\cap\Quord(A)&\text{ (\New{compatible quasiorders}).}
\end{align*}
The Galois closures for $\Pol-\Inv$ and $\End-\Inv$ are known and can be
characterized as follows: $\Pol\Inv F=\Sg{F}$ (clone generated by
$F$), $\Inv \Pol Q=[Q]_{\exists,\land,=}$ (relational clone, generated
by $Q$, equivalently characterizable as closure with respect to
primitive positive formulas, i.e., formulas containing variable and
relational symbols and only $\exists,\land,=$), $\End\Inv M=\Sg{M}$
((sub)monoid generated by $M\subseteq A^{A}$), $\Inv\End
Q=[Q]_{\exists,\land,\lor,=}$ (weak Krasner algebra generated by $Q$,
equivalently characterizable as closure with respect to positive
formulas, i.e., formulas containing variable and
relational symbols and $\exists,\land,\lor, =$). We refer to, e.g.,
\cite[1.2.1, 1.2.3, 2.1.3]{PoeK79}, \cite{BodKKR69a}, \cite{Poe04a}, \cite{KerPS2014}.
\end{PolInv}

\begin{definition}\label{M5}

 A set $F \subseteq \Op(A)$ is called a \New{preclone} if it
  contains $\id_{A}$ and is closed under the operations $\zeta$,
  $\tau$ and $\circ$ that are defined as
  follows. Let $f\in\Opa[n](A)$ and $g \in \Opa[m](A)$, $n,m\in\N_{+}$. Then

  \begin{enumerate}[(1)]

  \item\label{M5-1}
    $\id_A(x):=x$ (\New{identity operation});

  \item \label{M5-2}
       $(\zeta f)(x_{1},x_{2},\dots,x_{n}) := f(x_{2},\dots,x_{n},x_{1})$ 
      (\New{cyclic shift}), if
    $n = 1$ then $\zeta f := f$;

  \item \label{M5-3}
      $(\tau f)(x_{1},x_{2},x_{3},\dots,x_{n}) 
               := f(x_{2},x_{1},x_{3},\dots,x_{n})$\\
      (\New{permuting the first two
      arguments}),
     if
    $n = 1$ then $\tau f := f$;

  \item \label{M5-4}
     $ (f\circ
    g)(x_{1},\dots,x_{m},x_{m+1},\dots,x_{m+n-1}):=
    f(g(x_{1},\dots,x_{m}),x_{m+1},\dots,x_{m+n-1})$
(\New{composition}).
    
  \end{enumerate}

For later use we introduce here also the operations $\nabla$
(\New{adding a fictitious argument} at first place) and $\Delta$
(\New{identification of the first two arguments}):
\begin{enumerate}[\rm(1)]
\setcounter{enumi}{4}
  \item \label{M5-5}
       $(\nabla f)(x_{1},x_{2},\dots,x_{n+1})
      :=f(x_{2},\dots,x_{n+1})$,

\item\label{M5-6} $(\Delta
  f)(x_{1},\dots,x_{n-1}):=f(x_{1},x_{1},\dots,x_{n-1})$ if $n\geq 2$,
  and $\Delta f=f$ for $n=1$.
\end{enumerate}
\end{definition}

Remarks: 
Clearly, because of \eqref{M5-1} and \eqref{M5-4}, the unary part
$F\cap\Opa[1](A)$ of a preclone $F$ is a monoid.
The
$(m+n-1)$-ary function $f \circ g$ (defined in \eqref{M5-4}) sometimes 
is called \New{linearized composition} (or \New{superposition}),
because this is a special case of the general \emph{linearized
  composition}, \emph{linearization} or \emph{superposition} mentioned in
\cite[2.1]{BruDPS93}, \cite[page~2]{GraW1984} or
\cite[Section~2.1]{Leh2010}, respectively.

Preclones, also known as operads, can be thought as ``clones where
identification of variables is not allowed'' (cf.~\ref{M5A}).
The term \emph{preclone} was introduced by \'Esik
and Weil \cite{EsiW2005} in a study of the syntactic properties of
recognizable sets of trees. 
A general characterization of preclones as Galois closures via
so-called matrix collections can be found in \cite{Leh2010}.
The notion of \emph{operad} originates from the work in algebraic
topology by May \cite{May1972} 
and Boardman and Vogt \cite{BoaV1973}.
For general background and basic properties of operads, we refer the
reader to the survey article by Markl \cite{Mar2008}.  

Clones are special preclones. There are many (equivalent) definitions
of a clone. One of these 
definitions is that a clone is a set $F\subseteq\Op(A)$ closed under
\ref{M5}\eqref{M5-1}-\eqref{M5-6}, \cite[1.1.2]{PoeK79}. Therefore we have:

\begin{lemma}\label{M5A}
A preclone is a clone if and only if it is also closed under $\nabla$
(adding ficticious variables) and $\Delta$
(identification of variables). 
\end{lemma}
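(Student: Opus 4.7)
The plan is to prove both implications of the equivalence. The key observation is that the preclone axioms already capture linearized composition and argument permutation, so what is missing for a clone is precisely the ability to add fictitious variables (via $\nabla$) and to identify variables (via $\Delta$).

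For the \emph{only if} direction, if $F$ is a clone in the sense of \cite[1.1.2]{PoeK79} (closed under all six operations of Definition~\ref{M5}), then closure under $\nabla$ and $\Delta$ is tautological. Under the more classical clone definition (all projections plus closure under full superposition), one recovers $\nabla$ and $\Delta$ as specific substitutions with projections: $(\nabla f)(x_1,\ldots,x_{n+1}) = f(e^{n+1}_2,\ldots,e^{n+1}_{n+1})$, and $(\Delta f)(x_1,\ldots,x_{n-1})$ arises by substituting the projection $e^{n-1}_1$ into the first two arguments of $f$.

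For the \emph{if} direction, assume $F$ is a preclone also closed under $\nabla$ and $\Delta$. I would show $F$ satisfies the classical clone axioms: (a) $F$ contains every projection, and (b) $F$ is closed under the full superposition $(f,g_1,\ldots,g_n)\mapsto f(g_1,\ldots,g_n)$. For (a), iterating $\nabla$ on $\id_A = e^1_1$ gives $e^n_n\in F$ for every $n$, and then the cyclic shift $\zeta$ produces every $e^n_i$. For (b), given $f$ of arity $n$ and $g_1,\ldots,g_n$ of arity $m$ in $F$, apply $\circ$ repeatedly, interspersed with $\zeta$ and $\tau$ to move the next argument slot into position one; this yields the linearized composition
\[
f\bigl(g_1(x^{(1)}_1,\ldots,x^{(1)}_m),\ \ldots,\ g_n(x^{(n)}_1,\ldots,x^{(n)}_m)\bigr)
\]
of arity $nm$. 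Then use $\zeta,\tau$ to bring successive pairs $(x^{(i)}_j,x^{(i+1)}_j)$ into the first two positions and apply $\Delta$ to identify them; iterating column-wise collapses the $nm$ arguments down to $m$ and yields $f(g_1,\ldots,g_n)$.

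The main obstacle is not conceptual but notational: tracking the sequence of permutations needed between the applications of $\circ$ and $\Delta$. This is handled by noting that $\zeta$ (cyclic shift) and $\tau$ (transposition of the first two entries) generate the full symmetric group on $\{1,\ldots,k\}$ for every arity $k$, so any desired rearrangement of arguments can be realized inside $F$. Should some $g_i$ have arity smaller than $m$, $\nabla$ pads it to the common arity before the linearized composition step, so the argument applies uniformly.
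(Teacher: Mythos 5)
Your proof is correct, but it takes a genuinely different route from the paper. The paper gives no argument at all: it cites the definition of a clone from P\"oschel--Kalu\v{z}nin \cite[1.1.2]{PoeK79} as a set containing $\id_A$ and closed under all six operations $\zeta,\tau,\circ,\nabla,\Delta$ of Definition~\ref{M5}, so the lemma is an immediate restatement --- a preclone is closed under the first four, hence it is a clone exactly when it is also closed under $\nabla$ and $\Delta$. What you prove instead is the equivalence of that Mal'cev-style definition with the classical one (all projections plus full superposition $f(g_1,\dots,g_n)$), which is precisely the content the paper outsources to the reference. Your argument is sound: $\nabla$ iterated on $\id_A=e^1_1$ yields $e^n_n$ and $\zeta$ then produces all $e^n_i$; repeated use of $\circ$ interleaved with argument permutations builds the $nm$-ary linearized composition; and column-wise identification via $\Delta$ (again after permutations, using that $\zeta$ and $\tau$ generate the full symmetric group on argument positions) collapses it to $f(g_1,\dots,g_n)$. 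The trade-off is clear: the paper's version is a one-line appeal to a known equivalence of definitions, while yours is self-contained and makes the lemma independent of which clone definition the reader has in mind --- at the cost of the bookkeeping you rightly identify as the only real work.
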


For $F\subseteq \Op(A)$, the clone generated by $F$ is denoted
by $\Sg{F}$ or $\Sg[A]{F}$.

\section{The property $\boldsymbol \Xi$ and u-closed monoids}\label{sec0}

Equivalence relations or, more general, quasiorder relations $\rho$ have the
 remarkable property $\Xi$ (see \ref{MX} below) that a polymorphism $f\in\Pol\rho$ is
determined by its translations  $\trl{f}$  defined as
follows:

\begin{definitions}\label{M1}
For an $n$-ary operation $f:A^{n}\to A$, $i\in\{1,\dots,n\}$ and a tuple 
$\mathbf{a}=(a_{1},\dots,a_{i-1},a_{i+1},\dots,a_{n})\in A^{n-1}$, 
let $f_{\mathbf{a},i}$ be the unary polynomial function
\begin{align}
  f_{\mathbf{a},i}(x):=f(a_{1},\dots,a_{i-1},x,a_{i+1},\dots,a_{n}),\label{M1a}
\end{align}
called \New{translation} (see, e.g., \cite[Definition 1.4.7]{Ihr2003}, \New{1-translation} in \cite[p.~375]{Gra2008}, \New{basic
  translation} in \cite{Mal1963}) and let $\trl{f}$ 
be the set of all such \textbf{tr}ans\textbf{l}ations
$f_{\mathbf{a},i}$. For constants (as well as for arbitrary unary functions) $f$ we put $\trl{f}:=\{f\}$.
For $F\subseteq \Op(A)$ let 
\begin{align}\label{M1b}
  \trl{F}:=\bigcup_{f\in F}\trl{f}\subseteq A^{A}.
\end{align}
Given a set $M\subseteq A^{A}$ we define
\begin{align}\label{M1c}
  M^{*}:=\{f\in \Op(A)\mid \trl{f}\subseteq M\}.
\end{align}
\end{definitions}

Remark: For a unary function $f$ we have
$f\in (\trl{f})^{*}$, in particular
$f\in M^{*}$ implies $f\in M$. Thus $\trl{M^{*}}=M$ for every 
$M\subseteq A^{A}$.

\begin{definition}[\textbf{The property $\boldsymbol\Xi$}]\label{MX}
For a relation $\rho\in\Rel(A)$ we consider the following property
$\Xi$ in three equivalent formulations:
\begin{align}\label{MX1}
  \Xi(\rho):&\iff\quad \forall f\in\Op(A):\;
  f\preserves\rho\iff \trl{f}\preserves\rho\\\label{MX2}
  &\iff \quad \forall f\in\Op(A):\;
  f\in\Pol\rho\iff \trl{f}\subseteq\End\rho\\\label{MX3}
  &\iff \quad 
  \Pol\rho=(\End\rho)^{*}.
\end{align}
This can be extended to sets $Q\subseteq\Rel(A)$ just by substituting
$Q$ for $\rho$ in the above definition, e.g., $\Xi(Q)\iff\Pol Q=(\End Q)^{*}$.
\end{definition}

\begin{remark}\label{M2X}
As noticed above, it is well-known that \emph{$\Xi(\rho)$ holds for $\rho\in\Equ(A)$ or, more
general, for $\rho\in\Quord(A)$}. Equivalently, expressed with the
usual notions of congruence or quasiorder lattices, this means 
\begin{align*}
  \Con(A,F)=\Con(A,\trl{F})\text{ and }\Quord(A,F)=\Quord(A,\trl{F})
\end{align*}
for each algebra $(A,F)$ ($F\subseteq \Op(A)$).

\end{remark}

Clearly, there arises the question already mentioned in the
introduction:

 \centerline{\textit{Does there exist other relations
$\rho$ with the property $\Xi(\rho)$?}}
Since $\Xi(\rho)$ implies that
$(\End\rho)^{*}$ is a clone and therefore it is closed under $\nabla$
(cf.~\ref{M5A}). As we shall see in \ref{M6} below this also implies
$C\subseteq \End\rho$, 
what expresses the fact that $\rho$ is reflexive (for definition
see~\ref{A1}). However, the converse is not true: not each reflexive
relation satisfies $\Xi(\rho)$ as the following example shows.

\begin{example}\label{M3A}
Let $A=\{0,1,2\}$ and $M:=\End\rho$ for the binary relation
$\rho=\{(0,0),(1,1), (2,2), (0,1), (1,2)\}$. Note that $\rho$ is
reflexive but not transitive. Define $f:A^{2}\to A$ by the following
table:
\begin{center}
\begin{tabular}[c]{|r|rcc|}
\hline
  $f(x,y)$ & $y$= 0&1&2\\\hline
 $x$= 0 & 0&0&1\\
  1 & 0&0&1\\
  2 & 1&1&2\\\hline
\end{tabular}
\end{center}
One can immediately check that each unary polynomial $f_{\mathbf{a},i}$ 
preserves $\rho$, i.e., $\trl{f}\subseteq M$, but
$g:=\Delta f$ (i.e., $g(x)=f(x,x)$) is the mapping $0\mapsto 0$, $1\mapsto 0$, $2\mapsto 2$
which does not belong to $M$ (since $g$ does not preserve $\rho$ because
$g$ maps $(1,2)\in\rho$ to $(0,2)\notin\rho$). Thus $f\in M^{*}$ but $g\notin M^{*}$. Hence
$M^{*}$ is not a clone. 

\end{example}

Since $M^{*}$ is not always a clone, there also arises the question: what is
the algebraic nature of the sets $M^{*}$? The answer gives the
following proposition.

\begin{proposition}\label{M6}
  Let $M\leq A^{A}$ be a monoid. Then $M^{*}$ is a preclone
  (i.e., it contains $\id_{A}$ and is closed under the
   operations $\zeta,\tau,\circ$, cf.~{\rm\ref{M5}}). Moreover, $M^{*}$ is
  closed under $\nabla$ (cf.~{\rm\ref{M5}\eqref{M5-5}}) if and only if $C\subseteq M$.
\end{proposition}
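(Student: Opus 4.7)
The plan is to verify each of the preclone axioms from Definition~\ref{M5} separately for $M^{*}$, and then analyze what additional hypothesis is needed to also get closure under $\nabla$.

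First, $\id_{A} \in M$ because $M$ is a monoid, and the convention $\trl{\id_{A}} = \{\id_{A}\}$ in \ref{M1} immediately gives $\id_{A} \in M^{*}$. Closure under $\zeta$ and $\tau$ is essentially a bookkeeping exercise: permuting the arguments of $f$ only relabels the translations, so $\trl{\zeta f} = \trl{f}$ and $\trl{\tau f} = \trl{f}$; hence these operations leave $M^{*}$ invariant. The only nontrivial step is closure under $\circ$. Here I would take $f \in \Opa[n](A) \cap M^{*}$, $g \in \Opa[m](A) \cap M^{*}$, set $h := f \circ g$, and classify each translation $h_{\mathbf{a},i}$ by the position of its free variable:
\begin{itemize}
\item if $i \leq m$, the free variable sits inside $g$, so $h_{\mathbf{a},i}$ factors as a composition (of \emph{unary} functions) $f_{\mathbf{b},1} \circ g_{\mathbf{c},i}$ of a translation of $f$ and a translation of $g$; both factors lie in $M$, and since $M$ is a monoid, so does their composition;
\item if $i > m$, the $g$-block is fed only constants and collapses to a single value $c = g(a_{1},\dots,a_{m})$, so $h_{\mathbf{a},i}$ is \emph{itself} a translation of $f$ and thus in $\trl{f}\subseteq M$.
\end{itemize}
This covers all translations, so $h \in M^{*}$. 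The arities $n=1$ or $m=1$ do not cause any genuine issue; they merely remove one of the two cases.

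For the second assertion, the $(\Leftarrow)$ direction is analogous to the composition argument: every translation $(\nabla f)_{\mathbf{a},i}$ is either a translation of $f$ (when $i \geq 2$, recovered by deleting the fictitious first argument) or, for $i = 1$, the constant function $\bc_{f(a_{1},\dots,a_{n})}$; the first kind lies in $\trl{f} \subseteq M$ and the second kind lies in $C \subseteq M$ by hypothesis.

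The $(\Rightarrow)$ direction is the only place where one must actually \emph{produce} something new: I would feed $\id_{A} \in M^{*}$ into $\nabla$ to obtain $\nabla\id_{A} = e^{2}_{2}$; its translations at position $1$ are exactly the mappings $x \mapsto a$ for $a \in A$, i.e.\ all the constants in $C$. Assuming $\nabla\id_{A} \in M^{*}$ therefore forces $C \subseteq M$. I do not anticipate any real obstacle here; the entire proof is a careful index-chase, and the only subtle point is correctly identifying the ``collapsing'' case $i > m$ in the composition argument, which is what makes the preclone (rather than clone) structure unconditional.
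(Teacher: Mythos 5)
Your proposal is correct and follows essentially the same route as the paper: the same two-case split for translations of $f\circ g$ (free variable inside the $g$-block versus outside, using that $M$ is a monoid in the first case), and the same key observation that $\nabla\id_{A}=e^{2}_{2}$ has $\trl{e^{2}_{2}}=\{\id_{A}\}\cup C$, forcing $C\subseteq M$. The only cosmetic difference is that for the $(\Leftarrow)$ direction of the $\nabla$ claim the paper reuses closure under $\circ$ via the identity $\nabla f=f\circ e^{2}_{2}$, whereas you verify the translations of $\nabla f$ directly; both are equally valid.
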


\begin{proof} Clearly $\id_{A}\in M\subseteq M^{*}$.
It is straightforward to check that for $f,g\in M^{*}$ also $\zeta f,
\tau f$ and $f\circ g$ belong to $M^{*}$ (notation see
\ref{M5}). We show it for \ref{M5}\eqref{M5-4}: if all variables
$x_{1},\dots,x_{m},\dots,x_{m+n-1}$, with exception of $x_{i}$, are constant, say
$\mathbf{a}=(a_{1},\dots,a_{m},\dots,a_{m+n-1})$, then, for $i\geq m+1$, we have
$(f\circ
g)_{\mathbf{a},i}=f(b,a_{m+1},\dots,x_{i},\dots,a_{m+n-1})$
with $b:=g(a_{1},\dots,a_{m})$,
what obviously belongs to $\trl{f}\subseteq M$. 
If $i\leq m$, then $g_{\mathbf{a'},i}\in M$ (because $g\in M^{*}$),
$\mathbf{a'}:=(a_{1},\dots,a_{m})$ (without the $i$-th component) and
$f_{\mathbf{a''},1}(x)=f(x,a_{m+1},\dots,a_{m+n-1})$ belongs to $M$ (because $f\in M^{*}$), where
$\mathbf{a''}:=(a_{m+1},\dots,a_{m+n-1})$, consequently $(f\circ
g)_{\mathbf{a},i}(x)=f_{\mathbf{a''},1}(g_{\mathbf{a'},i}(x))$ also
  belongs to the monoid $M$. Thus $\trl{f\circ g}\subseteq M$, i.e.,
  $f\circ g\in M^{*}$.

Further we observe $\nabla f=f\circ e^{2}_{2}$ and 
$e^{2}_{2}=\nabla\id_{A}$ where $e^{2}_{2}$  is 
the binary projection $e^{2}_{2}(x_{1},x_{2})=x_{2}$.
Thus
the preclone $M^{*}$ is
closed under $\nabla$ if and only if $e^{2}_{2}\in M^{*}$.
But
  $\trl{e^{2}_{2}}=\{\id_{A}\}\cup C$ (since
  $e^{2}_{2}(a,x)=\id_{A}(x)$ and $e^{2}_{2}(x,a)=\bc_{a}$ for
  $a\in A$), therefore
  $e^{2}_{2}\in M^{*}\iff\trl{e^{2}_{2}}\subseteq M\iff C\subseteq M$, and we are done.
\end{proof}

\begin{remark}\label{M6A}  $M^{*}$ is a preclone for a
  monoid $M$ by~\ref{M6}. Conversely, for a preclone $P$ the translations
  $\trl{P}$ form a monoid (because of \ref{M5}\eqref{M5-1} and \eqref{M5-4}).
Thus we can consider the following two mappings between monoids and
preclones:
\begin{align*}
 \phi&: P\mapsto \trl{P},\text{ where $P$ is a preclone on $A$.}\\
 \psi&: M\mapsto M^{*}, \phantom{xi}\text{where $M\leq A^{A}$ is a monoid on $A$}.
\end{align*} 

Then $(\phi,\psi)$ is a residuated
pair of mappings (covariant Galois connection) between the lattice of submonoids
of $A^{A}$ and the lattice of preclones on $A$. We have $\phi(P)\subseteq
M\iff P\subseteq \psi(M)$. Moreover, the corresponding kernel operator
$\phi(\psi(M))=\trl{M^{*}}=M$ is trivial (cf.\ remark in Definition~\ref{M1}).
However, the corresponding closure operator $P\mapsto \psi(\phi(P))$ is
nontrivial and it is an open problem which preclones are closed, i.e.,
when do we have $P=\psi(\phi(P))=(\trl{P})^{*}$? 
\end{remark}

\begin{lemma}\label{M3B} Let $M_{i}\le A^{A}$, $i\in I$. Then 
   $(\bigcap_{i\in I} M_{i})^{*}=\bigcap_{i\in I}M_{i}^{*}$.
\end{lemma}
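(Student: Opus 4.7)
The statement is a set-theoretic identity that I expect to follow directly by unfolding the definition of $(\cdot)^{*}$ from \ref{M1c}, with no structural input about the $M_{i}$ needed beyond the fact that they are subsets of $A^{A}$ (not even the monoid property is used for this particular equality). The plan is therefore to chain equivalences through the defining condition $\trl{f}\subseteq M$.

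Concretely, for an arbitrary $f\in\Op(A)$ I would argue
\[
  f\in\Bigl(\bigcap_{i\in I}M_{i}\Bigr)^{*}
  \iff \trl{f}\subseteq\bigcap_{i\in I}M_{i}
  \iff \forall i\in I:\trl{f}\subseteq M_{i}
  \iff \forall i\in I: f\in M_{i}^{*}
  \iff f\in\bigcap_{i\in I}M_{i}^{*}.
\]
The first and last equivalences are just \eqref{M1c}; the second uses the elementary fact that a set is contained in an intersection iff it is contained in each member; and the third is again \eqref{M1c} applied to each $M_{i}$ individually.

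There is no real obstacle here. One might remark that the same argument shows $(\cdot)^{*}$ is antitone-to-monotone in the expected way (it preserves arbitrary intersections), which is consistent with its role as one half of the Galois-type correspondence discussed in~\ref{M6A}: being a right adjoint (in the residuated-pair sense there), $\psi=(\cdot)^{*}$ must preserve all meets in the lattice of subsets of $A^{A}$, and Lemma~\ref{M3B} is the concrete manifestation of this general fact. So in the write-up I would simply present the four-step chain of equivalences above and note that intersections of submonoids are submonoids, so the statement lives inside the lattice of submonoids as claimed.
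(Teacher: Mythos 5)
Your proof is correct and matches the paper's own argument exactly: the paper gives the same four-step chain of equivalences unwinding \eqref{M1c}, and it also makes the same side remark that the identity follows abstractly from the residual $\psi=(\cdot)^{*}$ of the residuated pair in~\ref{M6A} being meet-preserving.
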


\begin{proof}
Since, for a
residuated pair $(\phi,\psi)$, the residual $\psi$ is meet-preserving,
the Lemma immediately follows from~\ref{M6A}. We add a direct proof just using the definitions:
\begin{align*}
  f\in(\textstyle\bigcap_{i\in I} M_{i})^{*}
&\iff \trl{f}\subseteq\textstyle\bigcap_{i\in I}M_{i}
\iff \forall i\in I: \trl{f}\subseteq M_{i}\\
&\iff \forall i\in I: f\in M_{i}^{*}
\iff f\in \textstyle\bigcap_{i\in I}M_{i}^{*}. 
\end{align*}
\end{proof}

Since $M^{*}$ is not always a clone, the question arises:
For which monoids $M\leq A^{A}$ the preclone $M^{*}$ is a
clone?
To attack this problem we introduce the \New{u-closure} $\uclose{M}$
what shall lead to the equivalent problem
(cf.~\ref{M3D}\eqref{M3Diii}) of characterizing u-closed monoids.

\begin{definition}\label{M3C} For $M\subseteq A^{A}$ let
  \begin{align*}
    \uclose{M}:=\bigcap\{N\mid M\subseteq N \leq A^{A},\text{ and
      $N^{*}$ is a clone}\}.
  \end{align*}
A monoid $M\leq A^{A}$ is called \New{u-closed} if $\uclose{M}=M$.

\end{definition}

\begin{remarks}\label{M3D} Let $M\subseteq A^{A}$.
  \begin{enumerate}[\rm(i)]
  \item\label{M3Di} The operator $M\mapsto \uclose{M}$ is a closure operator
    (this follows from Lemma~\ref{M3B}).

  \item\label{M3Dii} $\uclose{M}$ is a monoid containing $C$ and
    $(\uclose{M})^{*}$ is a clone 
    (the latter follows from \ref{M3B} because, by
    definition, $\uclose{M}$ is the  
    intersection of \textsl{monoids} $N$ with $N^{*}$ being a clone; thus from
     \ref{M6} follows $C\subseteq \uclose{M}$, too). In particular
    we have $\uclose{\Sg{M}}=\uclose{M}=\Sg{\uclose{M}}$.

  \item\label{M3Diii}  \textit{$M$ is u-closed  (i.e. $\uclose{M}=M$)
      if and only if $M^{*}$ 
    is a clone} (in fact, ``$\Rightarrow$'' follows from \eqref{M3Dii},
    ``$\Leftarrow$'' follows from definition \ref{M3C}).

  \end{enumerate}
\end{remarks}

 A characterization of u-closed
monoids $M$ will be given in the next sections (Proposition~\ref{M7},
Theorem~\ref{A3} and Corollary~\ref{A3cor}).

\section{Generalized quasiorders}\label{secA}

\begin{notation}\label{M5B}
Let $A=\{a_{1},\dots,a_{k}\}$ and $M\leq A^{A}$. We define the
following $|A|$-ary relation:
\begin{align*}
  \Gamma_{M}:=\{(ga_{1},\dots,ga_{k})\mid g\in M\}.
\end{align*}
Thus $\Gamma_{M}$ consists of all ``function tables''
$\br_{g}:=(ga_{1},\dots,ga_{k})$ (considered as elements
(columns) of a relation) of the unary
functions $g$ in $M$. 

In particular, we have 
\begin{align*}
  M=\End\Gamma_{M}.
\end{align*}
 In fact,
$h\in\End\Gamma_{M}$, i.e., $h\preserves\Gamma_{M}$, implies $h(\br_{\id})\in\Gamma_{M}$, i.e.,
$\exists g\in M: h(\br_{id})=\br_{g}$ what gives $h=g\in M$. Conversely ,
if $h\in M$, then $h(\br_{g})=\br_{h\circ g}\in \Gamma_{M}$ for all
$\br_{g}\in\Gamma_{M}$, i.e., $h\preserves \Gamma_{M}$.

Moreover, it is known that $\Pol\Gamma_{M}$ coincides with the so-called
stabilizer $\Sta(M)$ of $M$ and it is the largest element in the
monoidal interval defined by $M$ (all clones with unary part $M$ form
an interval in the clone lattice, called \New{monoidal
  interval}, cf., e.g., \cite[Proposition 3.1]{Sze86}).
If $F$ is a clone with $F^{(1)}=M$, then
$\Gamma_{M}$ is the so-called first graphic of $F$ denoted by
$\Gamma_{F}(\chi_{1})$ in \cite{PoeK79}.

\end{notation}

\begin{definition}[\textbf{generalized quasiorder}]\label{A1}
An $m$-ary relation $\rho\subseteq A^{m}$ is called  \New{reflexive}
if $(a,\dots,a)\in\rho$ for all $a\in A$, and it is called
\New{(generalized) transitive} if for every $m\times m$-matrix
$(a_{ij})\in A^{m\times m}$ we have:
if every row and every column belongs to $\rho$ -- for this property we
write $\rho\models (a_{ij})$ -- then also the diagonal
$(a_{11},\dots,a_{mm})$ belongs to $\rho$, cf.~Figure~\ref{Fig1}.

\begin{figure}
%
\begin{picture}(0,0)%
\includegraphics{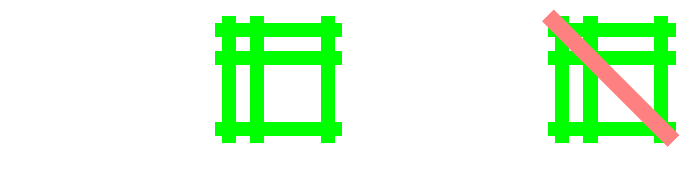}%
\end{picture}%
\setlength{\unitlength}{4972sp}%
\begingroup\makeatletter\ifx\SetFigFont\undefined%
\gdef\SetFigFont#1#2#3#4#5{%
  \reset@font\fontsize{#1}{#2pt}%
  \fontfamily{#3}\fontseries{#4}\fontshape{#5}%
  \selectfont}%
\fi\endgroup%
\begin{picture}(4365,1086)(-2849,-1331)
\put(-854,-466){\makebox(0,0)[lb]{\smash{{\SetFigFont{12}{14.4}{\rmdefault}{\mddefault}{\updefault}{\color[rgb]{0,0,0}$a_{1m}$}%
}}}}
\put(-1484,-466){\makebox(0,0)[lb]{\smash{{\SetFigFont{12}{14.4}{\rmdefault}{\mddefault}{\updefault}{\color[rgb]{0,0,0}$a_{11}$}%
}}}}
\put(-854,-1096){\makebox(0,0)[lb]{\smash{{\SetFigFont{12}{14.4}{\rmdefault}{\mddefault}{\updefault}{\color[rgb]{0,0,0}$a_{mm}$}%
}}}}
\put(-1484,-1096){\makebox(0,0)[lb]{\smash{{\SetFigFont{12}{14.4}{\rmdefault}{\mddefault}{\updefault}{\color[rgb]{0,0,0}$a_{m1}$}%
}}}}
\put(-2834,-736){\makebox(0,0)[lb]{\smash{{\SetFigFont{14}{16.8}{\rmdefault}{\mddefault}{\updefault}$\rho\models (a_{ij}):\Longleftrightarrow$}}}}
\put(-539,-466){\makebox(0,0)[lb]{\smash{{\SetFigFont{12}{14.4}{\rmdefault}{\mddefault}{\updefault}{\color[rgb]{0,0,0}$\in\rho$}%
}}}}
\put(-539,-646){\makebox(0,0)[lb]{\smash{{\SetFigFont{12}{14.4}{\rmdefault}{\mddefault}{\updefault}{\color[rgb]{0,0,0}$\in\rho$}%
}}}}
\put(-539,-1096){\makebox(0,0)[lb]{\smash{{\SetFigFont{12}{14.4}{\rmdefault}{\mddefault}{\updefault}{\color[rgb]{0,0,0}$\in\rho$}%
}}}}
\put(-1529,-1276){\makebox(0,0)[lb]{\smash{{\SetFigFont{12}{14.4}{\rmdefault}{\mddefault}{\updefault}{\color[rgb]{0,0,0}$\in\!\rho$}%
}}}}
\put(-1259,-1276){\makebox(0,0)[lb]{\smash{{\SetFigFont{12}{14.4}{\rmdefault}{\mddefault}{\updefault}{\color[rgb]{0,0,0}$\in\!\rho$}%
}}}}
\put(-809,-1276){\makebox(0,0)[lb]{\smash{{\SetFigFont{12}{14.4}{\rmdefault}{\mddefault}{\updefault}{\color[rgb]{0,0,0}$\in\!\rho$}%
}}}}
\put(631,-466){\makebox(0,0)[lb]{\smash{{\SetFigFont{12}{14.4}{\rmdefault}{\mddefault}{\updefault}{\color[rgb]{0,0,0}$a_{11}$}%
}}}}
\put(1261,-1096){\makebox(0,0)[lb]{\smash{{\SetFigFont{12}{14.4}{\rmdefault}{\mddefault}{\updefault}{\color[rgb]{0,0,0}$a_{mm}$}%
}}}}
\put(631,-1096){\makebox(0,0)[lb]{\smash{{\SetFigFont{12}{14.4}{\rmdefault}{\mddefault}{\updefault}{\color[rgb]{0,0,0}$a_{m1}$}%
}}}}
\put(1261,-466){\makebox(0,0)[lb]{\smash{{\SetFigFont{12}{14.4}{\rmdefault}{\mddefault}{\updefault}{\color[rgb]{0,0,0}$a_{1m}$}%
}}}}
\put(-89,-781){\makebox(0,0)[lb]{\smash{{\SetFigFont{14}{16.8}{\rmdefault}{\mddefault}{\updefault}$\boldsymbol{\implies}$}}}}
\put(1441,-1231){\makebox(0,0)[lb]{\smash{{\SetFigFont{14}{16.8}{\rmdefault}{\mddefault}{\updefault}\rotatebox{-43}{$\boldsymbol{\in\rho}$}}}}}
\end{picture}%

\caption{Transitivity for an $m$-ary relation $\rho$}\label{Fig1}
\end{figure}

A reflexive and transitive $m$-ary relation is called \New{generalized
quasiorder}. The set of all generalized
quasiorders on the base set $A$ is denoted by $\gQuord(A)$, and
$\gQuorda[m](A):=\Rela[m](A)\cap\gQuord(A)$ will denote the $m$-ary
generalized quasiorders. 
\end{definition}

\begin{examples}\label{A1Ex} From the definitions easily follows:

(i) Each quasiorder
    (i.e., binary reflexive and transitive 
    relation) is also a generalized quasiorder. The converse is also
    true: Each binary generalized quasiorder is a usual quasiorder
    relation, i.e., we have $\gQuord^{(2)}(A)=\Quord(A)$.

(ii) Each so-called diagonal relation is a generalized
     quasiorder. Here an $m$-ary relation $\delta\in\Rel(A)$
     ($m\in\N_{+}$) is called 
     \New{diagonal relation} if there exists an equi\-valence relation
     $\epsilon$ on the set $\{1,\dots,m\}$ of indices such that 
     $\delta=\{(a_{1},\dots,a_{m})\in A^{m}\mid \forall
     i,j\in\{1,\dots,m\}: (i,j)\in\epsilon\implies a_{i}=a_{j}\}$.
\end{examples}

We generalize the notation $\rho\models(a_{ij})$ to
$n$-dimensional ``$m\times \ldots\times m$-matrices'' (tensors)
$(a_{i_{1},\dots, i_{n}})\in A^{m\times\ldots\times m}$ where $i_{1},\dots,i_{n}\in\{1,\dots,m\}$):
$\rho\models (a_{i_{1},\dots, i_{n}})$ denotes the fact that every
``row'' in each dimension belongs to $\rho$, i.e., for each index
$j\in\{1,\dots,n\}$ and any fixed
$i_{1},\dots,i_{j-1},i_{j+1},\dots,i_{n}$ the $m$-tuple
$a_{i_{1},\dots,[j],\dots,i_{n}}:=
      (a_{i_{1},\dots,1,\dots,i_{n}},\dots,a_{i_{1},\dots,m,\dots,i_{n}})$
(the indices $1,\dots,m$ are on the $j$-th place in the index
sequence) belongs to $\rho$.

\emph{Example:} 
For $n=3$, $\rho\models (a_{i_{1},i_{2},i_{3}})$ means that for all
$i_{1},i_{2},i_{3}\in\{1,\dots,m\}$ we have 
$(a_{1,i_{2},i_{3}},\dots,a_{m,i_{2},i_{3}})\in\rho$, 
$(a_{i_{1},1,i_{3}},\dots,a_{i_{1},m,i_{3}})\in\rho$ and
$(a_{i_{1},i_{2},1},\dots,a_{i_{1},i_{2},m})\in\rho$. The (main)
diagonal of $(a_{i_{1},i_{2},i_{3}})$ is the $m$-tuple
$(a_{1,1,1},\dots,a_{m,m,m})$.

\emph{Remark:}
Let $A=\{1,\dots,k\}$. We mention that for an $n$-ary function $f:A^{n}\to A$ and a monoid $M\leq
A^{A}$ we have $f\in M^{*}$ if and only if $\Gamma_{M}\models
(a_{i_{1},\dots,i_{n}})$ where
$a_{i_{1},\dots,i_{n}}:=f({i_{1}},\dots,{i_{n}})$,
$i_{1},\dots,i_{n}\in\{1,\dots,k\}$. 

\begin{definitions}

For $\rho\subseteq A^{m}$ let $\rho^{\tra}$ denote the \New{transitive
closure} of $\rho$, i.e.,
$\rho^{\tra}=\bigcap\{\sigma\subseteq
A^{m}\mid \sigma \text{ is transitive and }\rho\subseteq\sigma\}$
is the least transitive relation containing $\rho$ (it is easy to
check that the intersection of transitive relations is again
transitive). Analogously, the \New{generalized quasiorder closure}
$\rho^{\gqu}$ is the least generalized quasiorder containing $\rho$.
The \New{reflexive closure} is naturally defined as
$\rho^{\rfl}:=\rho\cup\{(c,\dots,c)\in A^{m}\mid c\in A\}$. 
  
\end{definitions}

These
closures can be constructed (inductively) as follows.

\begin{proposition} \label{tra}
For $\rho\in \Rela[m](A)$ define 
$\partial(\rho):=\{(a_{11},\dots,a_{mm})\in A^{m}\mid \exists
(a_{ij})\in A^{m\times m}: \rho\models (a_{ij})\}$ and let $\rho^{(0)}:=\rho$,
$\rho^{(n+1)}:=\rho^{(n)}\cup\partial(\rho^{(n)})$ for $n\in\N$. Then we have
\begin{align*}
  \rho^{\tra}=\bigcup_{n\in\N}\rho^{(n)} \quad\text{ and }\quad
  \rho^{\gqu}=\rho^{\rfl\,\tra}.
\end{align*}
  
\end{proposition}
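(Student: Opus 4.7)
The plan is to prove the two equalities separately. Both are standard monotone closure arguments; the only content is that a single step of $\partial$ encodes exactly one use of generalized transitivity, and that $2m$ rows and columns of any $m\times m$ matrix always fit into one level of the chain.

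\textbf{For $\rho^{\tra}=\bigcup_{n\in\N}\rho^{(n)}$.} I would prove both inclusions. For $\supseteq$, I induct on $n$ to show $\rho^{(n)}\subseteq\rho^{\tra}$: the case $n=0$ is immediate, and if $\rho^{(n)}\subseteq\rho^{\tra}$, then any $(a_{11},\dots,a_{mm})\in\partial(\rho^{(n)})$ comes from a matrix $(a_{ij})$ whose rows and columns all lie in $\rho^{(n)}\subseteq\rho^{\tra}$; since $\rho^{\tra}$ is transitive by definition, the diagonal lies in $\rho^{\tra}$. For $\subseteq$, I set $\sigma:=\bigcup_{n\in\N}\rho^{(n)}$ and check that $\sigma$ is a transitive relation containing $\rho$, which by minimality of $\rho^{\tra}$ gives $\rho^{\tra}\subseteq\sigma$. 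It contains $\rho=\rho^{(0)}$. To see transitivity, suppose $\sigma\models (a_{ij})$: each of the $2m$ rows and columns lies in some $\rho^{(n_k)}$, and taking $n:=\max_k n_k$ (using that the chain $\rho^{(n)}$ is increasing), we get $\rho^{(n)}\models (a_{ij})$, hence the diagonal belongs to $\partial(\rho^{(n)})\subseteq\rho^{(n+1)}\subseteq\sigma$.

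\textbf{For $\rho^{\gqu}=\rho^{\rfl\,\tra}$.} The inclusion $\supseteq$ is immediate: $\rho^{\gqu}$ is a reflexive relation containing $\rho$, hence contains $\rho^{\rfl}$; being transitive, it must contain the least transitive relation $\rho^{\rfl\,\tra}$ extending $\rho^{\rfl}$. For $\subseteq$, I would verify that $\rho^{\rfl\,\tra}$ is itself a generalized quasiorder containing $\rho$. It is transitive by construction and contains $\rho\subseteq\rho^{\rfl}$. Reflexivity is the only point to check, and here the observation is that $\rho^{\rfl}$ already contains every tuple $(c,\dots,c)$ with $c\in A$, and the transitive closure only adds further tuples; thus $\rho^{\rfl\,\tra}$ remains reflexive. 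By minimality of $\rho^{\gqu}$, we conclude $\rho^{\gqu}\subseteq\rho^{\rfl\,\tra}$.

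\textbf{Obstacles.} There is no substantive obstacle: the argument reduces to the fact that any matrix witness $\sigma\models(a_{ij})$ involves only finitely many (namely $2m$) rows and columns, so a single level $\rho^{(n)}$ can accommodate them all—this is what makes a direct inductive construction possible without transfinite iteration. The only detail to be attentive about is distinguishing the use of the outer transitivity of $\rho^{\tra}$ (to close off each inductive step in the $\supseteq$ direction) from the internal finite-support argument for transitivity of the union (in the $\subseteq$ direction). Everything else is unwinding the definitions of $\partial$, $(\cdot)^{\rfl}$, and $(\cdot)^{\tra}$.
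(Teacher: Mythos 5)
Your proof is correct; the paper actually states Proposition~\ref{tra} without any proof, treating it as routine, and your argument (induction for $\bigcup_{n}\rho^{(n)}\subseteq\rho^{\tra}$, the finite-support/maximum-level argument for transitivity of the union, and the two minimality comparisons for $\rho^{\gqu}=\rho^{\rfl\,\tra}$) is exactly the standard one the authors had in mind. The one detail worth having made explicit, which you did, is that the chain $\rho^{(n)}$ is increasing so that all $2m$ rows and columns of a witnessing matrix land in a single level.
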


Remark: If $\rho$ is reflexive, then $\rho\subseteq\partial(\rho)$. For binary relations $\rho$ the operator $\partial$ is just
the relational product: $\partial(\rho)=\rho\circ\rho$.

\begin{lemma}\label{A1a}
  Let $\rho\in\gQuorda[m](A)$. Then for every $n$-dimensional
  $m\times\ldots\times m$-matrix
  $(a_{i_{1},\dots, i_{n}})_{i_{1},\dots,i_{n}\in\{1,\dots,m\}}$ we
  have
  \begin{align*}
    \rho\models (a_{i_{1},\dots, i_{n}})\implies 
    (a_{1,\dots,1},\dots,a_{m,\dots, m})\in\rho.
  \end{align*}
\end{lemma}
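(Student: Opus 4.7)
The plan is to prove the statement by induction on $n$, reducing the $n$-dimensional tensor case to the $2$-dimensional definition of (generalized) transitivity.

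The base case $n=1$ is vacuous (the single ``row'' is the tuple itself, which equals its own ``diagonal''), and the case $n=2$ is precisely the defining transitivity property of a generalized quasiorder (Definition~\ref{A1}). So assume the claim holds for all $(n{-}1)$-dimensional $m\times\dots\times m$ tensors, and suppose $\rho\models (a_{i_1,\dots,i_n})$.

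For each fixed $k\in\{1,\dots,m\}$, consider the $(n{-}1)$-dimensional slice $S^{(k)}:=(a_{i_1,\dots,i_{n-1},k})_{i_1,\dots,i_{n-1}\in\{1,\dots,m\}}$ obtained by freezing the last index to $k$. Any ``row'' of $S^{(k)}$ in one of its first $n-1$ directions is a row of the original tensor in that same direction (with the $n$-th index pinned to $k$), hence lies in $\rho$. Thus $\rho\models S^{(k)}$, and the inductive hypothesis yields
\begin{align*}
  (a_{1,\dots,1,k},\,a_{2,\dots,2,k},\,\dots,\,a_{m,\dots,m,k})\in\rho.
\end{align*}

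Now form the $m\times m$ matrix $B=(b_{jk})$ with $b_{jk}:=a_{j,\dots,j,k}$. Its $k$-th column is exactly the tuple displayed above, so every column of $B$ belongs to $\rho$. Its $j$-th row is $(a_{j,\dots,j,1},\dots,a_{j,\dots,j,m})$, which is a row of the original tensor in the $n$-th direction (with all other indices equal to $j$), and hence belongs to $\rho$. Therefore $\rho\models B$, and by transitivity of the (binary-matrix) generalized quasiorder $\rho$, the diagonal
\begin{align*}
  (b_{11},\dots,b_{mm}) = (a_{1,\dots,1},\,a_{2,\dots,2},\,\dots,\,a_{m,\dots,m})
\end{align*}
belongs to $\rho$, completing the induction.

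There is no real obstacle here beyond keeping the bookkeeping of indices straight; the crux is simply recognizing that the $n$-dimensional transitivity is obtained by first collapsing the first $n-1$ coordinates via the inductive hypothesis (for each fixed last coordinate) and then applying the base-case transitivity once more along the remaining coordinate.
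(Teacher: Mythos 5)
Your proof is correct and is essentially the paper's argument: both reduce the $n$-dimensional case to repeated applications of the two-dimensional transitivity by successively identifying coordinates of the tensor. The paper organizes this as an induction on the number $k$ of already-identified leading coordinates (via intermediate tensors $M_{k}$), whereas you induct on the dimension $n$ and slice along the last coordinate; the chain of $m\times m$ matrices to which the defining transitivity is applied is the same up to bookkeeping.
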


\begin{proof} For $n=2$ the
  condition follows from the definition of a generalized
  quasiorder. Thus we can assume $n\geq 3$. 
Let $M_{k}=(b^{k}_{i_{1},\dots,i_{n-k}})$ denote the $(n-k)$-dimensional
  $m\times\ldots\times m$-matrix with 
$b^{k}_{i_{1},\dots,i_{n-k}}:=a_{i_{1},\dots,i_{1},i_{2},\dots,i_{n-k}}$
 (the first $k$ coordinates are equal $i_{1}$).
. 
Thus $M_{0}=(a_{i_{1}\dots,i_{n}})$
  and $M_{n-1}=(b^{1}_{i})=(a_{i,\dots,i})_{i\in\{1,\dots,m\}}=
      (a_{1,\dots,1},\dots,a_{m,\dots, m})$.
We have to show $M_{n-1}\in\rho$ (formally $\rho\models M_{n-1}$). This
can be done by induction on $k$. By assumption we have $\rho\models
M_{k}$ for $k=0$. Assume $\rho\models M_{k}$ for some
$k\in\{0,1,\dots,n-2\}$. We are going to show $\rho\models M_{k+1}$ what
will finish the proof.

Let
  $i_{1},\dots,i_{n-k}\in\{1,\dots,m\}$. We fix
  $i_{3},\dots,i_{n-k}$ and consider the $m\times m$-matrix
  $M'_{k}:=(b^{k}_{i,j,i_{3},\dots,i_{n-k}})_{i,j\in\{1,\dots,m\}}$. Clearly, $\rho\models M_{k}$
  implies $\rho\models M'_{k}$. Therefore
  $(b^{k}_{1,1,i_{3},\dots,i_{n-k}},\dots,b^{k}_{m,m,i_{3},\dots,i_{n-k}})\in\rho$
  because $\rho$ is a generalized quasiorder. 
  Since
  $i_{3},\dots,i_{n-k}$ were chosen arbitrarily, this implies
  (together with $\rho\models M_{k}$) that we have
  $\rho\models M_{k+1}$ (note 
   $b^{k+1}_{i_{1},i_{3},\dots,i_{n-k}}=b^{k}_{i_{1},i_{1},i_{3},\dots,i_{n-k}}$). 
\end{proof}

One of the crucial properties of generalized quasiorders is that
preservation of a relation only depends on the translations, i.e., it
extends the property $\Xi$ (see~\eqref{MX1}) from (usual)
quasiorders to generalized quasiorders.

\begin{theorem}\label{A1b}
For $f\in \Op(A)$ and $\rho\in\gQuord(A)$ we have:
\begin{align*}
  f\preserves\rho \iff \trl{f}\preserves\rho.
\end{align*}  
Thus $\Xi(\rho)$ holds.
\end{theorem}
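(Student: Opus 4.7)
The forward direction is essentially immediate: if $f$ preserves $\rho$, then for any translation $f_{\mathbf{a},i}$ and any $r \in \rho$, we can apply $f$ to the $n$ tuples consisting of $r$ at position $i$ and the constant tuples $(a_j, \ldots, a_j) \in \rho$ (which lie in $\rho$ by reflexivity of the generalized quasiorder) to obtain $f_{\mathbf{a},i}(r) \in \rho$. So I would dispose of this half in one or two lines.

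For the nontrivial direction, assume $\trl{f} \preserves \rho$, with $f$ of arity $n$ and $\rho$ of arity $m$. Pick arbitrary $r_1, \ldots, r_n \in \rho$, writing $r_j = (r_j(1), \ldots, r_j(m))$. The plan is to build an $n$-dimensional $m \times \cdots \times m$ tensor by
\begin{align*}
  a_{i_1,\ldots,i_n} := f\bigl(r_1(i_1), r_2(i_2), \ldots, r_n(i_n)\bigr),
  \qquad i_1,\ldots,i_n \in \{1,\ldots,m\},
\end{align*}
and then to apply Lemma~\ref{A1a} to conclude that the main diagonal $(a_{1,\ldots,1}, \ldots, a_{m,\ldots,m})$ lies in $\rho$. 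Since this diagonal is precisely $f(r_1, \ldots, r_n)$, that finishes the proof.

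The heart of the argument is verifying $\rho \models (a_{i_1,\ldots,i_n})$. For this, fix an index $j \in \{1,\ldots,n\}$ and fix all coordinates $i_1,\ldots,i_{j-1},i_{j+1},\ldots,i_n$; the resulting $m$-tuple in the $j$-th direction is
\begin{align*}
  \bigl(f_{\mathbf{a},j}(r_j(1)), \ldots, f_{\mathbf{a},j}(r_j(m))\bigr) = f_{\mathbf{a},j}(r_j),
\end{align*}
where $\mathbf{a} = (r_1(i_1), \ldots, r_{j-1}(i_{j-1}), r_{j+1}(i_{j+1}), \ldots, r_n(i_n))$. By hypothesis $f_{\mathbf{a},j} \in \trl{f}$ preserves $\rho$, so this tuple lies in $\rho$. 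This holds for every direction $j$ and every choice of the remaining indices, which is exactly what $\rho \models (a_{i_1,\ldots,i_n})$ demands.

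I do not expect any real obstacle: Lemma~\ref{A1a} has already absorbed the combinatorial work of lifting the two-dimensional transitivity condition to higher-dimensional tensors, so the proof reduces to the correct tensor bookkeeping above. Edge cases are mild: for $n=1$ we have $\trl{f} = \{f\}$ and the equivalence is tautological, and for constants $f$ the reflexivity of $\rho$ again gives both directions trivially, so I would only need to treat $n \ge 2$ in the main argument.
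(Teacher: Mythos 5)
Your proposal is correct and follows the paper's own proof essentially verbatim: the forward direction via reflexivity of $\rho$ and the fact that translations are compositions of $f$ with constants, and the converse by forming the tensor $a_{i_1,\dots,i_n}=f(r_1(i_1),\dots,r_n(i_n))$, checking $\rho\models(a_{i_1,\dots,i_n})$ through the translations $f_{\mathbf{a},j}$, and invoking Lemma~\ref{A1a} to place the diagonal $f(r_1,\dots,r_n)$ in $\rho$. No gaps.
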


\begin{proof}
``$\Rightarrow"$: Since each $g\in\trl{f}$ is a composition of $f$ and
constants $c\in C$ and since constants preserve $\rho$ because of reflexivity,
we have $\trl{f}\subseteq\Sg{\{f\}\cup C}\preserves\rho$.
 ``$\Leftarrow$'': Let $\ar(f)=n$, $\ar(\rho)=m$, $\trl{f}\preserves \rho$ and let
 $r_{1},\dots,r_{n}\in\rho$. We are going to show
 $f(r_{1},\dots,r_{n})\in\rho$ what implies $f\preserves\rho$ and will
 finish the proof. 
Define $a_{i_{1},\dots,i_{n}}:=f(r_{1}(i_{1}),\dots,r_{n}(i_{n}))$. 
Then $a_{i_{1},\dots,[j],\dots,i_{n}}=f_{\bb,j}(r_{j})\in\rho$ for
$\bb=(r_{1}(i_{1}),\dots,r_{j-1}(i_{j-1}),r_{j+1}(i_{j+1}),\dots,r_{n}(i_{n}))$ (notation see
\eqref{M1a}) because $f_{\bb,j}\in\trl{f}\preserves\rho$, $j\in\{1,\dots,n\}$. Thus
$\rho\models(a_{i_{1},\dots,i_{n}})$ and we have
\begin{align*}
  f(r_{1},\dots,r_{n})&=(f(r_{1}(1),\dots,r_{n}(1)),\dots,f(r_{1}(m),\dots,r_{n}(m)))\\
  &=(a_{1,\dots,1},\dots,a_{m,\dots,m})\in\rho
\end{align*}
by \ref{A1a}, and we are done.
\end{proof}

\begin{corollary}\label{A1c}
Let $F\subseteq\Op(A)$ and $Q\subseteq\gQuord(A)$. Then
  \begin{enumerate}[\rm(i)]
  \item\label{A1ci} $\gQuord(A,F)=\gQuord(A,\trl{F})$ (cf.~Remark~\ref{M2X})
  \item\label{A1cii} $\Xi(Q)$ holds, i.e., $\Pol Q=(\End Q)^{*}$, in particular,
    $(\End Q)^{*}$ is a 
    clone and $\End Q$ is u-closed.
  \end{enumerate}
\end{corollary}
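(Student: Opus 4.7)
The plan is to derive both statements directly from Theorem~\ref{A1b}, treated once ``pointwise'' over $F$ (for part (i)) and once globally through the operators $\Pol,\End$ (for part (ii)). No new idea is needed: the theorem already gives the crucial equivalence $f\preserves\rho\iff \trl{f}\preserves\rho$ for every generalized quasiorder $\rho$, and the corollary is essentially its ``functorial'' reformulation.

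For part \eqref{A1ci}, I would first unfold the definition: by analogy with $\Quord(A,F)$ in \ref{PolInv}, we have $\gQuord(A,F)=\Inv F\cap\gQuord(A)$, and similarly $\gQuord(A,\trl{F})=\Inv\trl{F}\cap\gQuord(A)$. So for any $\rho\in\gQuord(A)$ I would chain the equivalences
\begin{align*}
  \rho\in\gQuord(A,F)
  &\iff \forall f\in F:\;f\preserves\rho\\
  &\iff \forall f\in F:\;\trl{f}\preserves\rho\\
  &\iff \forall g\in\trl{F}:\;g\preserves\rho
  \iff \rho\in\gQuord(A,\trl{F}),
\end{align*}
where the middle equivalence is precisely Theorem~\ref{A1b} applied to each $f\in F$, and the last step uses the definition $\trl{F}=\bigcup_{f\in F}\trl{f}$ from \eqref{M1b}.

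For part \eqref{A1cii}, I would show the two inclusions of $\Pol Q=(\End Q)^{*}$ separately. The inclusion $\Pol Q\subseteq (\End Q)^{*}$: if $f\in\Pol Q$, then $f\preserves\rho$ for every $\rho\in Q$, and the easy ``$\Rightarrow$'' direction of Theorem~\ref{A1b} (which only uses reflexivity, via $\trl{f}\subseteq\Sg{\{f\}\cup C}$) gives $\trl{f}\preserves\rho$ for every such $\rho$; hence $\trl{f}\subseteq\End Q$, i.e.\ $f\in(\End Q)^{*}$. The reverse inclusion $(\End Q)^{*}\subseteq\Pol Q$: if $\trl{f}\subseteq\End Q$, then $\trl{f}\preserves\rho$ for every $\rho\in Q\subseteq\gQuord(A)$, and the nontrivial ``$\Leftarrow$'' direction of Theorem~\ref{A1b} (which uses generalized transitivity via Lemma~\ref{A1a}) delivers $f\preserves\rho$, so $f\in\Pol Q$.

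Finally, since $\Pol Q$ is always a clone (being a Galois closure on the operation side), the equality $(\End Q)^{*}=\Pol Q$ shows that $(\End Q)^{*}$ is a clone, and Remark~\ref{M3D}\eqref{M3Diii} then yields that $\End Q$ is u-closed. There is no genuine obstacle here; the only thing to be careful about is bookkeeping of the definition of $\gQuord(A,F)$ (not written out explicitly in the preliminaries) and making sure the easy direction of Theorem~\ref{A1b} is cited with the observation that reflexivity of $\rho\in\gQuord(A)$ is what makes constants into endomorphisms.
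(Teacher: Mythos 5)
Your proof is correct and follows essentially the same route as the paper: both parts are direct consequences of Theorem~\ref{A1b} applied to each $\rho\in Q$ (resp.\ each $f\in F$), and the final step via Remark~\ref{M3D}\eqref{M3Diii} matches the paper's conclusion. The only cosmetic difference is in (ii): the paper computes $\Pol Q=\bigcap_{\rho\in Q}\Pol\rho=\bigcap_{\rho\in Q}(\End\rho)^{*}$ and then invokes Lemma~\ref{M3B} to pull the intersection inside the star, whereas you unwind that lemma by proving the two inclusions elementwise --- the same computation in different packaging.
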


\begin{proof}
\eqref{A1ci} directly follows from \ref{A1b}. Concerning \eqref{A1cii},
we have 

$\Pol Q=\bigcap_{\rho\in Q}\Pol\rho
=_{\ref{A1b},\eqref{MX3}}\bigcap_{\rho\in Q}(\End\rho)^{*}
=_{\ref{M3B}}(\bigcap_{\rho\in Q}\End\rho)^{*}=(\End Q)^{*}$, i.e., $\Xi(Q)$.
\end{proof}

Now we characterize the u-closed monoids $M\leq A^{A}$
(i.e., $\uclose{M}=M$) by various properties.
The condition \ref{M7}\eqref{M7iii} will show that
the situation as in Example~\ref{M3A} is
characteristic for being \textbf{not} u-closed.

\begin{proposition}\label{M7}
 For a monoid $M\leq A^{A}$ the
  following are equivalent:
  \begin{enumerate}[\rm(i)]
  \item\label{M7i} $M$ is u-closed (equivalently, $M^{*}$ is a clone),
  \item\label{M7ii} $M^{*}=\Pol\Gamma_{M}$,
  \item\label{M7iii} $C\subseteq M$ and for every binary $f\in M^{*}$ we have $\Delta
    f\in M$,
  \item \label{M7iv}$\Gamma_{M}$ is a generalized quasiorder.
  \end{enumerate}

\end{proposition}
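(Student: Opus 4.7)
The plan is to establish the cycle (iv) $\Rightarrow$ (ii) $\Rightarrow$ (i) $\Rightarrow$ (iii) $\Rightarrow$ (iv), using the earlier machinery wherever possible and saving the genuine work for the final implication.

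For (iv) $\Rightarrow$ (ii), recall that $M = \End\Gamma_{M}$ by Notation~\ref{M5B}. If $\Gamma_M$ is a generalized quasiorder, Corollary~\ref{A1c}\eqref{A1cii} applied to $Q=\{\Gamma_M\}$ yields $\Pol\Gamma_M=(\End\Gamma_M)^{*}=M^{*}$. The step (ii) $\Rightarrow$ (i) is then immediate: $\Pol\Gamma_M$ is always a clone, so $M^{*}$ is a clone and hence $M$ is u-closed by Remark~\ref{M3D}\eqref{M3Diii}. For (i) $\Rightarrow$ (iii), assume $M^{*}$ is a clone. By Lemma~\ref{M5A} it is closed under $\nabla$, so Proposition~\ref{M6} forces $C\subseteq M$. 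Moreover, for any binary $f\in M^{*}$, closure under $\Delta$ (again Lemma~\ref{M5A}) gives $\Delta f\in M^{*}$; but $\Delta f$ is unary and for unary functions $g$ we have $g\in M^{*}\iff \trl{g}=\{g\}\subseteq M\iff g\in M$. Thus $\Delta f\in M$.

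The crucial implication is (iii) $\Rightarrow$ (iv). Write $A=\{a_{1},\dots,a_{k}\}$, so $\Gamma_M$ has arity $k$. Reflexivity is easy: $C\subseteq M$ means the constant map $\bc_a\in M$, whose row $\br_{\bc_a}=(a,\dots,a)$ lies in $\Gamma_M$. For (generalized) transitivity, suppose $(b_{ij})\in A^{k\times k}$ has all rows and all columns in $\Gamma_M$, i.e. for each $i$ there is $g_i\in M$ with $g_i(a_j)=b_{ij}$ and for each $j$ there is $h_j\in M$ with $h_j(a_i)=b_{ij}$. The idea is to package these unary witnesses into a single binary function and then invoke (iii). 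Define $f\in\Opa[2](A)$ by $f(a_i,a_j):=b_{ij}$. Its translations are exactly the row and column functions: $f(a_i,\cdot)=g_i\in M$ and $f(\cdot,a_j)=h_j\in M$, so $\trl{f}\subseteq M$, i.e. $f\in M^{*}$. By (iii), $\Delta f\in M$, and since $(\Delta f)(a_i)=f(a_i,a_i)=b_{ii}$, the diagonal $(b_{11},\dots,b_{kk})=\br_{\Delta f}$ belongs to $\Gamma_M$. Hence $\Gamma_M$ is a generalized quasiorder.

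The main obstacle is precisely the transitivity part of (iii) $\Rightarrow$ (iv); the key observation is that in the relation $\Gamma_M$ the arity coincides with $|A|$, which is exactly what allows one to encode an arbitrary compatible matrix as a binary operation $f$ with $f(a_i,a_j)=b_{ij}$. Once this translation between ``matrices whose rows and columns are in $\Gamma_M$'' and ``binary operations in $M^{*}$'' is made explicit, the $\Delta$-closure condition in (iii) delivers the diagonal row required by Definition~\ref{A1}. The remaining implications are essentially bookkeeping on top of Corollary~\ref{A1c} and the preclone/clone dichotomy from Section~\ref{sec0}.
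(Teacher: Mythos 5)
Your proof is correct, and the heart of it --- encoding a $k\times k$ matrix whose rows and columns lie in $\Gamma_{M}$ as a binary operation $f$ with $\trl{f}\subseteq M$, so that the $\Delta$-condition of (iii) delivers exactly the diagonal required by Definition~\ref{A1} --- is precisely how the paper proves (iii)$\iff$(iv). Where you genuinely differ is in how the cycle is closed. The paper proves (iii)$\implies$(i) directly and elementarily: if the preclone $M^{*}$ (Proposition~\ref{M6}) were not a clone, some $n$-ary $f\in M^{*}$ would have $\Delta f\notin M^{*}$, and fixing all but the first two arguments reduces this to a \emph{binary} counterexample, contradicting (iii); it then proves (i)$\implies$(ii) using the fact that $\Pol\Gamma_{M}$ is the largest clone with unary part $M$ together with a direct computation of the reverse inclusion via $g=f[\id_{A},\bc_{a_{2}},\dots,\bc_{a_{n}}]$. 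You instead route (iii)$\implies$(iv)$\implies$(ii)$\implies$(i), obtaining (iv)$\implies$(ii) from $M=\End\Gamma_{M}$ and Corollary~\ref{A1c}\eqref{A1cii}, hence ultimately from Theorem~\ref{A1b} and Lemma~\ref{A1a}. This is legitimate, since those results precede the proposition, and it is shorter on the page; the trade-off is that it leans on the comparatively heavy $\Xi$-machinery for generalized quasiorders, whereas the paper's decomposition keeps (iii)$\implies$(i) self-contained at the preclone level and thereby makes visible that the binary $\Delta$-condition alone forces clone-ness, independently of any relation. (One small point worth stating explicitly in your (iii)$\implies$(iv) step: $\trl{f}$ for binary $f$ consists exactly of the maps $f(a,\cdot)$ and $f(\cdot,a)$ for $a\in A$, which is what identifies it with the set of rows and columns of the matrix.)
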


\begin{proof}  Each of the conditions \eqref{M7i}, \eqref{M7ii} and
\eqref{M7iv} implies $C\subseteq M$ (cf.\ \ref{M6} for \eqref{M7i},
\eqref{M7ii} and note that $\Gamma_{M}$ is reflexive if and only if
$C\subseteq M$). Thus we can assume $C\subseteq M$ in the following.

\eqref{M7ii}$\implies$\eqref{M7i}$\implies$\eqref{M7iii}
  is clear (each set of the form $\Pol Q$ is a clone, and any clone is
  closed under $\Delta$).

\eqref{M7i}$\implies$\eqref{M7ii}: $M$ is just the unary part $F^{(1)}$
  of the clone $F:=M^{*}$.
  It is well-known (cf., e.g., \cite[Proposition 3.1]{Sze86}) that $\Pol\Gamma_{M}$ is the
  largest clone $F$ with unary part $F^{(1)}=M$, thus $M^{*}=F\subseteq
  \Pol\Gamma_{M}$.

Conversely, let $f\in\Pol\Gamma_{M}$, i.e.,
  $f\preserves\Gamma_{M}$. Remember that the elements of $\Gamma_{M}$
  are of the form $\br_{g}$ for some $g\in M$ (notation
  see~\ref{M5B}). Thus $f\preserves\Gamma_{M}$ means
  $f(\br_{g_{1}},\dots,\br_{g_{n}})\in\Gamma_{M}$ whenever
  $g_{1},\dots,g_{n}\in M$. Since
  $f(\br_{g_{1}},\dots,\br_{g_{n}})=\br_{f[g_{1},\dots,g_{n}]}$,  this equivalently can be
  expressed by the condition that the composition
  $f[g_{1},\dots,g_{n}]$ belongs to $M$ whenever 
  $g_{1},\dots,g_{n}\in M$.
Consequently, any translation $g:=f_{\mathbf{a},i}$ derived from $f$ 
  (w.l.o.g.\ we take $i=1$), say
  $g(x):=f(x,a_{2},\dots,a_{n})$ for some $a_{2},\dots,a_{n}\in A$, must belong to $M$, since
  $g=f[\id_{A},\bc_{a_{2}},\dots,\bc_{a_{n}}]$ and $M$ contains the identity
  $\id_{A}$ and the constant functions. Thus $\trl{f}\subseteq M$, hence
  $f\in M^{*}$, and we get $\Pol\Gamma_{M}\subseteq M^{*}$.

\eqref{M7iii}$\implies$\eqref{M7i}: Assume \eqref{M7iii} and assume on the
contrary that $M^{*}$ is not a clone. We lead this to a contradiction.
 Since $M^{*}$ is a preclone
by~\ref{M6}, $M^{*}$ cannot be closed under $\Delta$ and there must
exist a function $f\in M^{*}$, say $n$-ary, such that $h:=\Delta
f\notin M^{*}$ (clearly $n\geq 3$, otherwise we have a contradiction
to \eqref{M7iii}). Thus some translation $g:=h_{\mathbf{a},i}$ derived from $h$
cannot belong to $M$. If $i\not= 1$, then
$g(x)=h(c_{1},\dots,c_{i-1},x,c_{i+1}\dots,c_{n-1})=f(c_{1},c_{1},c_{i-1},x,c_{i+1}\dots,c_{n-1})$
would belong to $M$ since $f\in M^{*}$. Therefore $i=1$ and
$g(x)=h(x,c_{2},\dots,c_{n-1})=f(x,x,c_{2},\dots,c_{n-1})$ does not
belong to $M$. Consider the binary function
$f'(x_{1},x_{2}):=f(x_{1},x_{2},c_{2},\dots,c_{n-1})$. We have $f'\in
M^{*}$ (since $f\in M^{*}$) and $\Delta f'\notin M$ (since $g=\Delta
f'$ by definition), in contradiction to~\eqref{M7iii}.

\eqref{M7iii}$\iff$\eqref{M7iv}: Let $A=\{1,\dots,k\}$. There is a
bijection between binary operations $f:A^{2}\to A$ and $(k\times k)$-matrices
$(a_{ij})$ via $a_{ij}=f(i,j)$ for $i,j\in\{1,\dots,k\}$. Note that
rows and colums of 
$(a_{ij})$ are just the function tables $(f(i,1),\dots,f(i,k))$ and
$(f(1,j),\dots,f(k,j))$ of the translations
$f(i,x)$ and $f(x,j)$. Therefore $f\in M^{*}$ (i.e.,
$\trl{f}\subseteq M$ by definition) is equivalent to the property that
all rows and colums of $(a_{ij})$ belong to $\Gamma_{M}$ (since the
colums of $\Gamma_{M}$ are just the function tables of the unary
functions in $M$), i.e., $\Gamma_{M}\models (a_{ij})$. Further, $\Delta f\in M$ is equivalent to the
property that the diagonal $(a_{11},\dots,a_{kk})$ of $(a_{ij})$
belongs to $\Gamma_{M}$. Thus condition $\eqref{M7iii}$ is equivalent
to the reflexivity (because $C\subseteq M$) and transitivity of $\Gamma_{M}$ (according to \ref{A1}), and
therefore to $\Gamma_{M}$ being a generalized quasiorder.
\end{proof}

The following corollary is a simple tool to construct functions in
the u-closure of a monoid. 

\begin{corollary}\label{M8} Let $A=\{1,\dots,k\}$ and $M\leq
  A^{A}$. If, for a binary operation $h:A^{2}\to A$, we have 
$h\in(\uclose{M})^{*}$, in particular if $h\in M^{*}$, then $\Delta h\in
  \uclose{M}$. 
\end{corollary}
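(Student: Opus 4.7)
The plan is to apply Proposition~\ref{M7} to the monoid $\uclose{M}$ itself, rather than to $M$. The crucial observation, which I will invoke first, is Remark~\ref{M3D}\eqref{M3Dii}: the closure $\uclose{M}$ is a monoid and $(\uclose{M})^{*}$ is a clone. By the equivalence \eqref{M7i}$\iff$\eqref{M7iii} in Proposition~\ref{M7} (applied with $\uclose{M}$ in place of $M$), this says that $C\subseteq\uclose{M}$ and for every binary operation $f\in(\uclose{M})^{*}$ we have $\Delta f\in\uclose{M}$. Specializing this to $f:=h$ gives $\Delta h\in\uclose{M}$, which is exactly the first (and main) claim of the corollary.

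For the ``in particular'' clause, I would verify the easy monotonicity statement $M^{*}\subseteq(\uclose{M})^{*}$: any $h\in M^{*}$ satisfies $\trl{h}\subseteq M\subseteq\uclose{M}$ by definition of $\uclose{M}$ (which contains $M$ as a subset, by Remark~\ref{M3D}\eqref{M3Di}), so $h\in(\uclose{M})^{*}$. This reduces the second assertion to the first.

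The only ``obstacle,'' and it is a mild one, is recognizing which monoid to feed into Proposition~\ref{M7}: one is tempted to apply it to $M$, but $M$ need not be u-closed, so condition~\eqref{M7iii} of \ref{M7} may fail for $M$. Passing to $\uclose{M}$ immediately removes this difficulty, because $\uclose{M}$ is by construction u-closed. Once this switch is made, the corollary is a one-line consequence of \eqref{M7i}$\implies$\eqref{M7iii} in Proposition~\ref{M7}, together with the elementary monotonicity of the operator $N\mapsto N^{*}$.
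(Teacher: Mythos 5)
Your proof is correct and is exactly the paper's argument: the paper's proof reads, in its entirety, that the statement is just Proposition~\ref{M7}\eqref{M7iii} applied to the u-closed monoid $\uclose{M}$. Your additional verification of the monotonicity $M^{*}\subseteq(\uclose{M})^{*}$ for the ``in particular'' clause is a sound (if left implicit in the paper) elaboration of the same route.
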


\begin{proof}
  The statement is just \ref{M7}\eqref{M7iii} for the u-closed monoid $\uclose{M}$.
\end{proof}

We mention further that $h\in(\uclose{M})^{*}$ is equivalent to
$\Gamma_{\uclose{M}}\models V$ for the matrix $V:=(h(i,j))_{i,j\in A}$.



\section{The Galois connection $\boldsymbol{\End-\gQuord}$}
\label{secB} 

\begin{EndgQuord}\label{A2}
The preservation property $\preserves$ induces a Galois connection 
between unary mappings and generalized quasiorders given by the
operators
\begin{align*}
  \End Q&:=\{h\in A^{A}\mid \forall \rho\in Q: h\preserves\rho\} \text{
  (endomorphisms) and}\\
  \gQuord M&:=\{\rho\in\gQuord(A)\mid \forall h\in M:
             h\preserves\rho\} \text{ (generalized quasiorders)}
  \end{align*}
 for
$M\subseteq A^{A}$ and $Q\subseteq\gQuord(A)$. The corresponding
Galois closures are $\End\gQuord M$ and $\gQuord\End Q$.
\end{EndgQuord}

\normalsize

Now we can show one of our main results, namely that the
u-closed monoids are just the Galois closures with respect to the Galois
connection $\End - \gQuord$. As a consequence (as shown in~\ref{A3cor} and~\ref{A3Xi}) we can answer the
questions raised in the Introduction.

\begin{theorem}\label{A3} Let $M\subseteq A^{A}$. Then we have:
  \begin{align*}
    \uclose{M}=\End\gQuord M.
  \end{align*}

\end{theorem}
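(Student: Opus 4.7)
The plan is to establish the two inclusions separately, using the machinery built up in Sections~\ref{sec0} and~\ref{secA}. Both directions are short once the right earlier results are invoked, so the main work is identifying them rather than grinding through a new argument.

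For the inclusion $\uclose{M}\subseteq\End\gQuord M$, I would first observe that $\gQuord M$ is a set of generalized quasiorders, so by Corollary~\ref{A1c}\eqref{A1cii} the monoid $\End(\gQuord M)$ is u-closed. Since any Galois connection satisfies $M\subseteq\End\gQuord M$, this u-closed monoid contains $M$. But $\uclose{M}$ is by definition (cf.~\ref{M3C}) the smallest u-closed monoid containing $M$, hence $\uclose{M}\subseteq\End\gQuord M$.

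For the reverse inclusion $\End\gQuord M\subseteq\uclose{M}$, the key move is to exhibit a single generalized quasiorder belonging to $\gQuord M$ whose endomorphism monoid is exactly $\uclose{M}$. The natural candidate is the relation $\Gamma_{\uclose{M}}$ from Notation~\ref{M5B}. Since $\uclose{M}$ is u-closed, Proposition~\ref{M7}\eqref{M7iv} tells us that $\Gamma_{\uclose{M}}$ is a generalized quasiorder; the identity $\End\Gamma_{\uclose{M}}=\uclose{M}$ is part of the same Notation~\ref{M5B}. Because $M\subseteq\uclose{M}=\End\Gamma_{\uclose{M}}$, every element of $M$ preserves $\Gamma_{\uclose{M}}$, and hence $\Gamma_{\uclose{M}}\in\gQuord M$. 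Consequently
\begin{align*}
  \End\gQuord M\;\subseteq\;\End\Gamma_{\uclose{M}}\;=\;\uclose{M},
\end{align*}
which completes the proof.

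The only potential obstacle is the verification that $\Gamma_{\uclose{M}}$ really does land in $\gQuord M$; this is why Proposition~\ref{M7} is essential, as it converts the purely combinatorial notion of u-closedness into the relational statement that $\Gamma_{\uclose{M}}$ is reflexive and generalized transitive. Once that link is in hand, the theorem is just a double inclusion between a closure defined via an operator on monoids and a closure defined via a Galois connection, matched through the canonical relation $\Gamma$.
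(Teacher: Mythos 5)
Your proposal is correct and follows essentially the same route as the paper: both directions rest on Corollary~\ref{A1c}\eqref{A1cii} (to see that $\End\gQuord M$ is u-closed and hence contains $\uclose{M}$) and on Proposition~\ref{M7}\eqref{M7iv} together with $\End\Gamma_{\uclose{M}}=\uclose{M}$ from Notation~\ref{M5B} (to see that $\Gamma_{\uclose{M}}\in\gQuord M$ forces $\End\gQuord M\subseteq\uclose{M}$). The paper merely compresses the two inclusions into a single chain, so your presentation is a faithful, slightly more explicit version of the same argument.
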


\begin{proof} 
At first we observe $M\subseteq\End\gQuord M$
(this holds for every Galois connection),
$M\subseteq\uclose{M}=\End \Gamma_{\uclose{M}}$, in particular
$M\preserves \Gamma_{\uclose{M}}$, and by
\ref{M7}\eqref{M7iv} we know $\Gamma_{\uclose{M}}\in\gQuord(A)$. Thus
$\Gamma_{\uclose{M}}\in \gQuord M$. Consequently we get
$\uclose{M}\subseteq\uclose{\End\gQuord M}=_{\ref{A1c}\eqref{A1cii}}\End\gQuord M\subseteq\End
\Gamma_{\uclose{M}}=\uclose{M}$, and we are done.
 \end{proof}

In addition to the characterization in \ref{M7} we give some further
consequenses of Theorem~\ref{A3}, characterizing $M^{*}$ (\ref{A3cor}\eqref{A3a}) and u-closed
monoids $M$ (\ref{A3cor}\eqref{A3b}). Since every monoid can be given as endomorphism monoid of
invariant relations, $M=\End Q$, we also look for the characterization
of those $Q$ with u-closed endomorphism monoid (\ref{A3cor}\eqref{A3c}):
\begin{corollary}\label{A3cor}
  \begin{enumerate}[\rm(a)]
  \item\label{A3a} $(\uclose{M})^{*}=\Pol\gQuord M$ for $M\subseteq
    A^{A}$.

  \item\label{A3b} 
   The following are equivalent for $M\leq A^{A}$:
    \begin{itemize}
    \item[\rm(i)] $M$ is u-closed,
   \qquad {\rm(i)$'$} $M^{*}$ is a clone,
   \qquad {\rm(i)$''$} $\Gamma_{M}\in\gQuord(A)$,
    \item[\rm(ii)] $M=\End Q$ for some $Q\subseteq\gQuord(A)$,
    \item[\rm(iii)] $M^{*}=\Pol Q$ for some $Q\subseteq\gQuord(A)$,
    \end{itemize}
where the same $Q$ can be taken in {\rm(ii)} and {\rm(iii)}.

  \item\label{A3c}
   The following are equivalent for $Q\subseteq\Rel(A)$:
    \begin{itemize}
    \item[\rm(i)] $\End Q$ is u-closed,
    \quad {\rm(i)$'$ }  $(\End Q)^{*}$ is a clone,
    \quad {\rm(i)$''$ }  $\Gamma_{\End Q}\in\gQuord(A)$,
    \item[\rm(ii)]  $\exists
      Q'\subseteq\gQuord(A): \End Q=\End Q'$, 
    \item[\rm(ii)$'$] $\exists Q'\subseteq\gQuord(A):
      [Q]_{\exists,\land,\lor,=}=[Q']_{\exists,\land,\lor,=}$\\ 
   (closure under positive formulas)
    \item[\rm(iii)] $\exists Q'\subseteq\gQuord(A): (\End Q)^{*}=\Pol Q'$,
    \end{itemize}
where the same $Q'$ can be taken in {\rm(ii)} and {\rm(iii)}.
Instead of `` $\exists Q'\subseteq\gQuord(A)$'' one can take
`` $\exists\rho\in\gQuord(A)$'' and $Q'=\{\rho\}$.
 
\end{enumerate}
\end{corollary}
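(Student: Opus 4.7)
The plan is to assemble this corollary from three earlier ingredients: Theorem~\ref{A3} (which identifies $\uclose{M}$ with $\End\gQuord M$), Proposition~\ref{M7} (relating u-closedness of $M$, $M^{*}$ being a clone, and $\Gamma_{M}\in\gQuord(A)$), and Corollary~\ref{A1c}\eqref{A1cii} (property $\Xi$ for sets of generalized quasiorders, i.e.\ $(\End Q)^{*}=\Pol Q$ whenever $Q\subseteq\gQuord(A)$). The universal witness for the existential claims will be $Q:=\gQuord M$ in part~\eqref{A3b}; in part~\eqref{A3c} one just replays this with $\End Q$ in the role of $M$.

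For part~\eqref{A3a} I would apply $(\cdot)^{*}$ to Theorem~\ref{A3} and then invoke~\ref{A1c}\eqref{A1cii} with $Q:=\gQuord M\subseteq\gQuord(A)$ to get
\[
(\uclose{M})^{*}=(\End\gQuord M)^{*}=\Pol\gQuord M.
\]
For part~\eqref{A3b}, the equivalence (i)$\Leftrightarrow$(i)$'$$\Leftrightarrow$(i)$''$ is exactly Proposition~\ref{M7}. The implication (i)$\Rightarrow$(ii) follows by taking $Q:=\gQuord M$, so that Theorem~\ref{A3} reads $M=\uclose{M}=\End Q$; the converse (ii)$\Rightarrow$(i) is the u-closedness assertion of~\ref{A1c}\eqref{A1cii}. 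Then (i)$\Rightarrow$(iii) with the \emph{same} $Q$ is precisely part~\eqref{A3a} just established, and (iii)$\Rightarrow$(i)$'$ is immediate because every $\Pol Q$ is a clone.

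For part~\eqref{A3c}, the internal equivalences (i)$\Leftrightarrow$(i)$'$$\Leftrightarrow$(i)$''$ are Proposition~\ref{M7} applied to the monoid $\End Q$, while (i)$\Leftrightarrow$(ii) and (i)$\Leftrightarrow$(iii) are immediate translations of the corresponding clauses of~\eqref{A3b}. The same witness $Q'$ serves both (ii) and (iii) because $\End Q=\End Q'$ forces $(\End Q)^{*}=(\End Q')^{*}=\Pol Q'$ via~\ref{A1c}\eqref{A1cii}. The equivalence (ii)$\Leftrightarrow$(ii)$'$ is the standard $\End$-$\Inv$ Galois-connection fact recorded in~\ref{PolInv}: $\End Q=\End Q'$ iff $\Inv\End Q=\Inv\End Q'$ iff $[Q]_{\exists,\land,\lor,=}=[Q']_{\exists,\land,\lor,=}$. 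Finally, the reduction to a single $\rho\in\gQuord(A)$ is obtained by choosing $\rho:=\Gamma_{\End Q}$: this lies in $\gQuord(A)$ by clause (i)$''$, and the identity $M=\End\Gamma_{M}$ from~\ref{M5B} (with $M:=\End Q$) yields $\End\{\rho\}=\End Q$, so $Q':=\{\rho\}$ also realizes (iii) by the argument just given.

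No genuine obstacle arises; the corollary is essentially a repackaging of Theorem~\ref{A3}, Proposition~\ref{M7}, and Corollary~\ref{A1c}. The one point that needs care is the ``same $Q$'' clause in parts~\eqref{A3b} and~\eqref{A3c}: property $\Xi$ (Corollary~\ref{A1c}\eqref{A1cii}) is precisely what guarantees that a single witness works simultaneously in (ii) and (iii).
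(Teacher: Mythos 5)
Your proposal is correct and follows essentially the same route as the paper: part (a) from Theorem~\ref{A3} plus $\Xi(\gQuord M)$, the equivalences in (b) from Proposition~\ref{M7}/Remark~\ref{M3D} with witness $Q=\gQuord M$, and (c) as the specialization $M=\End Q$ together with the $\End$--$\Inv$ Galois connection and the single witness $\Gamma_{\End Q}$. The only (harmless) difference is that you prove (ii)$\Rightarrow$(i) directly via Corollary~\ref{A1c}\eqref{A1cii} and then (i)$\Rightarrow$(iii) via (a), whereas the paper closes the cycle (i)$\Rightarrow$(ii)$\Rightarrow$(iii)$\Rightarrow$(i)$'$.
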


\begin{proof}
\eqref{A3a}: Let $Q:=\gQuord M$. Then $\Xi(Q)$ by \ref{A1b}, i.e.,
$\Pol Q=(\End Q)^{*}$ (cf.~\eqref{MX1}). Thus $\Pol Q=(\End\gQuord
M)^{*}=(\uclose{M})^{*}$ by \ref{A3}.

\eqref{A3b}: For (i)$\iff$(i)$'$$\iff$(i)$''$ see
\ref{M3D}\eqref{M3Diii} and \ref{M7}\eqref{M7iv}.

(i)$\implies$(ii): Take $Q:=\gQuord M$. If $M$ is u-closed, then
$M=\uclose{M}=_{\eqref{A3}}\End Q$.

(ii)$\implies$(iii): $(\End Q)^{*}=\Pol Q$ directly follows from
\ref{A1c}\eqref{A1cii}.

(iii)$\implies$(i)$'$ is obvious, because $M^{*}=\Pol Q$ is a clone.

\eqref{A3c} is just \eqref{A3b} for $M=\End Q$. (ii)$\iff$(ii)$'$
follows from the properties of the Galois connection $\End-\Inv$
(in particular $[Q]_{\exists,\land,\lor,=}=\Inv\End Q$,
cf.~\ref{PolInv}).
Further note, that $Q'=\{\Gamma_{\End Q}\}$ also will do the job (instead  of
arbitrary $Q'$) since $\End Q=\End\Gamma_{\End Q}$.  
\end{proof}

Now we are also able to answer the question which (sets of) relations
satisfy the property $\Xi$ (cf.~\ref{MX}):

\begin{proposition}\label{A3Xi}  The following are equivalent for $Q\subseteq\Rel(A)$:
  \begin{itemize}
    \item[\rm(i)] $\Xi(Q)$ holds, i.e., $\Pol Q=(\End Q)^{*}$,

    \item[\rm(ii)] $\exists Q'\subseteq\gQuord(A): \Pol Q=\Pol Q'$,

    \item[\rm(ii)$'$] $\exists Q'\subseteq\gQuord(A):
      [Q]_{\exists,\land,=}=[Q']_{\exists,\land,=}$\\ 
   (closure under primitive positive formulas),

    \item[\rm(ii)$''$] $[Q]_{\exists,\land,=}=[[Q]_{\exists,\land,=}\cap\gQuord(A)]_{\exists,\land,=}$.
    \end{itemize}

\end{proposition}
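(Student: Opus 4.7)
The plan is to prove the equivalences in a cycle, using the machinery already developed: the $\Pol-\Inv$ Galois connection, Corollary~\ref{A1c}\eqref{A1cii}, and Corollary~\ref{A3cor}\eqref{A3b}.

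First, I would dispose of the equivalence (ii)$\iff$(ii)$'$, which is purely formal: it follows from the fact that for the Galois connection $\Pol-\Inv$, $\Pol Q = \Pol Q'$ is equivalent to $\Inv\Pol Q = \Inv\Pol Q'$, and $\Inv\Pol Q = [Q]_{\exists,\land,=}$ by \ref{PolInv}. Then (ii)$'\iff$(ii)$''$ is a short calculation: the implication (ii)$'\Rightarrow$(ii)$''$ follows by observing that any $Q'\subseteq\gQuord(A)$ satisfying the equality in (ii)$'$ must lie in $[Q]_{\exists,\land,=}\cap\gQuord(A)$, which sandwiches the closures; for (ii)$''\Rightarrow$(ii)$'$ one simply takes $Q':=[Q]_{\exists,\land,=}\cap\gQuord(A)$.

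The heart of the proof is (i)$\iff$(ii), and here I would lean directly on Theorem~\ref{A3} and Corollary~\ref{A3cor}\eqref{A3b}. For (i)$\Rightarrow$(ii), set $M:=\End Q$. The hypothesis $\Xi(Q)$ gives $\Pol Q=M^{*}$, so $M^{*}$ is a clone, hence by \ref{M3D}\eqref{M3Diii} $M$ is u-closed. Theorem~\ref{A3} then yields $M=\uclose{M}=\End\gQuord M$, and Corollary~\ref{A3cor}\eqref{A3a} gives $M^{*}=(\uclose M)^{*}=\Pol\gQuord M$. So $Q':=\gQuord M\subseteq\gQuord(A)$ satisfies $\Pol Q = \Pol Q'$.

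For the converse (ii)$\Rightarrow$(i), let $Q'\subseteq\gQuord(A)$ satisfy $\Pol Q=\Pol Q'$. The key observation is that the unary parts coincide: $\End Q=\Pol Q\cap A^{A}=\Pol Q'\cap A^{A}=\End Q'$. By Corollary~\ref{A1c}\eqref{A1cii} the property $\Xi(Q')$ holds, so $\Pol Q'=(\End Q')^{*}$. Combining, $\Pol Q=\Pol Q'=(\End Q')^{*}=(\End Q)^{*}$, which is exactly $\Xi(Q)$. I expect no real obstacle here; the only point requiring a second of care is the identification $\End Q=\End Q'$, and the final remark that $Q'=\{\rho\}$ with a single $\rho$ suffices, which one obtains by taking $\rho:=\Gamma_{\End Q}$ (a generalized quasiorder by~\ref{M7}\eqref{M7iv} once $\End Q$ is u-closed) and noting $\End Q = \End\Gamma_{\End Q}$ as in~\ref{M5B}.
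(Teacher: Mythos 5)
Your proposal is correct and follows essentially the same route as the paper: (i)$\Rightarrow$(ii) via $M:=\End Q$, u-closedness, Theorem~\ref{A3} and Corollary~\ref{A3cor}(a) with $Q':=\gQuord M$; (ii)$\Rightarrow$(i) via $\End Q=\End Q'$ and Corollary~\ref{A1c}(ii); and (ii)$\iff$(ii)$'$ from the $\Pol-\Inv$ Galois connection. The only differences are that you spell out the (ii)$'\iff$(ii)$''$ step the paper calls ``obvious'' and append the (correct, but not required) observation that a single $\rho=\Gamma_{\End Q}$ suffices.
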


\begin{proof}
  (i)$\implies$(ii): Assume $\Pol Q=(\End Q)^{*}$ and let $M:=\End Q$
  and $Q':=\gQuord M$. $M$ is u-closed
  (since $M^{*}$ is a clone), therefore
  $\Pol Q=M^{*}=(\uclose{M})^{*}=_{\ref{A3cor}\eqref{A3a}}\Pol Q'$.

(ii)$\implies$(i): Assume $\Pol Q=\Pol Q'$ ($Q'\subseteq \gQuord(A)$).
Then $\End Q=\End Q'$ and we have $\Pol Q=\Pol Q'=_{\ref{A1c}\eqref{A1cii}}(\End
Q')^{*}=(\End Q)^{*}$, consequently $\Xi(Q)$ by Definition~\ref{MX}.

(ii)$\iff$(ii)$'$
follows from the properties of the Galois connection $\Pol-\Inv$
(in particular $[Q]_{\exists,\land,=}=\Inv\Pol Q$, cf.~\ref{PolInv}).
(ii)$'$$\iff$(ii)$''$ is obvious.
\end{proof}

\begin{remark}\label{A3New} We know from \ref{A1b} that
  $\rho\in\gQuord(A)$ implies 
  $\Xi(\rho)$.  The converse is not true:
  $\Xi(\rho)$ does not imply  $\rho\in\gQuord(A)$ in general!  
A
counterexample is the binary relation $\rho=\{(i,j)\mid 1\leq i,j\leq
n,\; j\leq i+1,\; j\neq i-1\}$
in \cite[Example~3.5]{LaeP84} on an at least
$5$-element set $A=\{1,\dots,n\}$. This relation is
strongly $C$-rigid (what means $\Pol\rho=\Sg{\{\id_{A}\}\cup C}$)
and reflexive, but not transitive, i.e.,
$\rho\notin\gQuord(A)$. Nevertheless  $\Xi(\rho)$ holds. To see this
we have to show $\Pol\rho=M^{*}$, where $M:=\End\rho$, i.e., $M=\{\id_{A}\}\cup
C=T$.
$M^{*}$ is u-closed (what we shall prove in~\ref{Btriv}), thus  $M^{*}=\Pol\Gamma_{M}$ by
\ref{M7}\eqref{M7ii}. 
By \cite[Proposition~2.2.]{LaeP84}, for a clone $F$, if its unary part
$F^{(1)}$ equals
$\{\id_{A}\}\cup C$, then $F=\Sg{\{\id_{A}\}\cup C}$. Consequently, for
$F=M^{*}$ we have $F^{(1)}=M=\{\id_{A}\}\cup C$ and therefore we get $M^{*}=\Sg{\{\id_{A}\}\cup C}=\Pol\rho$.

For $n=5$ we get the relation $\rho$ shown in 
Figure~\ref{FigB}
(this is a so-called tournament). 

\begin{figure}
  \begin{center}
\begin{picture}(0,0)%
\includegraphics{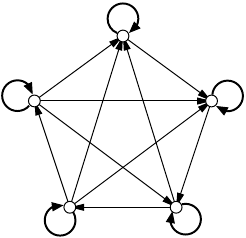}%
\end{picture}%
\setlength{\unitlength}{4144sp}%
\begingroup\makeatletter\ifx\SetFigFont\undefined%
\gdef\SetFigFont#1#2#3#4#5{%
  \reset@font\fontsize{#1}{#2pt}%
  \fontfamily{#3}\fontseries{#4}\fontshape{#5}%
  \selectfont}%
\fi\endgroup%
\begin{picture}(1866,1800)(2484,-1327)
\put(3151,164){\makebox(0,0)[lb]{\smash{{\SetFigFont{12}{14.4}{\rmdefault}{\mddefault}{\updefault}{\color[rgb]{0,0,0}$1$}%
}}}}
\put(4141,-556){\makebox(0,0)[lb]{\smash{{\SetFigFont{12}{14.4}{\rmdefault}{\mddefault}{\updefault}{\color[rgb]{0,0,0}$2$}%
}}}}
\put(4051,-1141){\makebox(0,0)[lb]{\smash{{\SetFigFont{12}{14.4}{\rmdefault}{\mddefault}{\updefault}{\color[rgb]{0,0,0}$3$}%
}}}}
\put(2701,-1141){\makebox(0,0)[lb]{\smash{{\SetFigFont{12}{14.4}{\rmdefault}{\mddefault}{\updefault}{\color[rgb]{0,0,0}$4$}%
}}}}
\put(2611,-556){\makebox(0,0)[lb]{\smash{{\SetFigFont{12}{14.4}{\rmdefault}{\mddefault}{\updefault}{\color[rgb]{0,0,0}$5$}%
}}}}
\end{picture}%
  \caption{A reflexive but not transitive relation $\rho$ with $\Xi(\rho)$}\label{FigB}
\end{center}
\end{figure}

Nevertheless, by \ref{A3Xi}, $\rho$ must be  ``constructively
equivalent''  to some 
$Q'\subseteq\gQuord(A)$,
  i.e., $[\rho]_{\exists,\land,=}=[Q']_{\exists,\land,=}$.
  In this concrete case we can take $Q'=\{\Gamma_{M}\}$, i.e., we
   have $[\rho]_{\exists,\land,=}=[\Gamma_{M}]_{\exists,\land,=}$, since
  $\Pol\rho=\Pol\Gamma_{M}$. 
\end{remark}

Before we investigate the u-closure for concrete monoids we show how
this closure behaves under taking products and substructures.
 For this we need some notation.

\begin{definition}\label{A4}
Let $g_{i}\in A_{i}^{A_{i}}$ ($i\in\{1,2\}$) and $A=A_{1}\times
A_{2}$. Then $g:=g_{1}\otimes g_{2}$ denotes the unary operation $g\in
A^{A}$ defined componentwise by
$g(a_{1},a_{2}):=(g_{1}a_{1},g_{2}a_{2})$. For $M_{i}\subseteq
A_{1}^{A_{i}}$  we put $M_{1}\otimes M_{2}:=\{g_{1}\otimes g_{2}\mid
 g_{1}\in M_{1}\text{ and }g_{2}\in M_{2}\}$.

Further, for $\rho_{i}\in\Rela[m](A_{i})$ and  $Q_{i}\subseteq
\Rel(A_{i})$, $i\in\{1,2\}$, let
\begin{align*}
  \rho_{1}\otimes\rho_{2}&:=\{((a_{1},b_{1}),\dots,(a_{m},b_{m}))\mid
(a_{1},\dots,a_{m})\in\rho_{1}\text{ and
}(b_{1},\dots,b_{m})\in\rho_{2}\},\\
Q_{1}\otimes Q_{2}&:=\{\rho_{1}\otimes \rho_{2}\mid
 \rho_{1}\in Q^{(m)}_{1}\text{ and }\rho_{2}\in Q^{(m)}_{2},\, m\in\N_{+}\}.
\end{align*}
Remark: For monoids $M_{1}, M_{2}$, the product $M_{1}\otimes M_{2}$ is
isomorphic (as monoid) to the direct product $M_{1}\times M_{2}$.
\end{definition}

\begin{proposition}\label{A5}
Let $\id_{A_{i}}\in M_{i}\subseteq A_{i}^{A_{i}}$, $i\in\{1,2\}$ and
$A=A_{1}\times A_{2}$. Then we have
\begin{enumerate}[\rm(a)]
\item \label{A5a} $\gQuord_{A}(M_{1}\otimes
  M_{2})=(\gQuord_{A_{1}}M_{1})\otimes (\gQuord_{A_{2}}M_{2})$.

\item \label{A5b} $\uclose{M_{1}\otimes M_{2}}=\uclose{M_{1}}\otimes \uclose{M_{2}}$.
\end{enumerate}
\end{proposition}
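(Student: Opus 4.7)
The plan is to prove (a) by two inclusions and then derive (b) from (a) together with the characterization $\uclose{M}=\End\gQuord M$ of Theorem~\ref{A3}.

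For the inclusion ``$\supseteq$'' of (a), given $\rho_{i}\in\gQuord_{A_{i}}M_{i}$ of common arity $m$, I would verify coordinatewise that $\rho_{1}\otimes\rho_{2}$ is reflexive, generalized transitive (any $m\times m$-matrix whose rows and columns lie in $\rho_{1}\otimes\rho_{2}$ projects to matrices satisfying the transitivity condition for $\rho_{1}$ and $\rho_{2}$, whose diagonals reassemble into the required diagonal), and preserved by every $g_{1}\otimes g_{2}\in M_{1}\otimes M_{2}$. For ``$\subseteq$'' I would take $\rho\in\gQuord_{A}(M_{1}\otimes M_{2})$ of arity $m$, define $\rho_{i}:=\pi_{i}(\rho)$ as the componentwise projection to $A_{i}$, and check that $\rho_{i}\in\gQuord_{A_{i}}M_{i}$ by lifting tuples and matrices from $A_{i}$ to $A$ via arbitrary preimages in $\rho$ and by exploiting that $g_{i}\otimes\id_{A_{3-i}}\in M_{1}\otimes M_{2}$ preserves $\rho$. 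The core difficulty, and the main obstacle of the whole proof, is the rectangularity $\rho=\rho_{1}\otimes\rho_{2}$: given $(a_{1},\dots,a_{m})\in\rho_{1}$ and $(b_{1},\dots,b_{m})\in\rho_{2}$ together with witness tuples $r,s\in\rho$ projecting to them, I would assemble an $m\times m$-matrix (or higher-dimensional tensor) of $A$-elements whose rows and columns all lie in $\rho$, and then invoke Lemma~\ref{A1a} to conclude that its diagonal $((a_{1},b_{1}),\dots,(a_{m},b_{m}))$ lies in $\rho$; here the full strength of generalized transitivity, rather than just binary transitivity, is indispensable.

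For (b), Theorem~\ref{A3} gives $\uclose{M_{1}\otimes M_{2}}=\End_{A}\gQuord_{A}(M_{1}\otimes M_{2})$, which by (a) equals $\End_{A}\bigl[(\gQuord_{A_{1}}M_{1})\otimes(\gQuord_{A_{2}}M_{2})\bigr]$. It then suffices to identify the endomorphisms of all tensor products $\rho_{1}\otimes\rho_{2}$ with $\rho_{i}\in\gQuord_{A_{i}}M_{i}$ as exactly the maps $h_{1}\otimes h_{2}$ with $h_{i}\in\End_{A_{i}}\gQuord_{A_{i}}M_{i}=\uclose{M_{i}}$. The inclusion $\uclose{M_{1}}\otimes\uclose{M_{2}}\subseteq\End_{A}[\cdots]$ is componentwise. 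Conversely, a candidate $h\in A^{A}$ can be tested against tensor products built from diagonal relations (which belong to $\gQuord_{A_{i}}M_{i}$ by Example~\ref{A1Ex}(ii)): preservation of the tensor of the equality relation on one factor with the full relation on the other forces $h$ to factor as $h_{1}\otimes h_{2}$, and then preservation of $\rho_{i}\otimes\{(a,\dots,a)\mid a\in A_{3-i}\}$ for arbitrary $\rho_{i}\in\gQuord_{A_{i}}M_{i}$ yields $h_{i}\in\uclose{M_{i}}$, completing the proof.
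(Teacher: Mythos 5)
Your architecture for (b) is essentially the paper's: both of you reduce it via Theorem~\ref{A3} and part (a) to a product decomposition of $\End$. The difference is that the paper outsources the two substantive facts to \cite[2.3.7]{PoeK79} --- namely $\Inv_{A}(M_{1}\otimes M_{2})=(\Inv_{A_{1}}M_{1})\otimes(\Inv_{A_{2}}M_{2})$ for (a), and $\End_{A}(Q_{1}\otimes Q_{2})=(\End_{A_{1}}Q_{1})\otimes(\End_{A_{2}}Q_{2})$ for (b) --- and then only checks the easy componentwise equivalence $\rho_{1}\otimes\rho_{2}\in\gQuord(A)\iff\rho_{1}\in\gQuord(A_{1})\text{ and }\rho_{2}\in\gQuord(A_{2})$, which you also describe. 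Your self-contained replacement for the second citation is correct: testing $h$ against $\Delta_{A_{1}}\otimes\nabla_{A_{2}}$ and $\nabla_{A_{1}}\otimes\Delta_{A_{2}}$ (diagonal relations, hence in the respective $\gQuord M_{i}$) forces $h=h_{1}\otimes h_{2}$, and testing against $\rho_{i}$ tensored with a diagonal pins down $h_{i}\in\uclose{M_{i}}$.

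The gap sits exactly where you locate the ``core difficulty'': you assert that a matrix with all rows and columns in $\rho$ and diagonal $((a_{1},b_{1}),\dots,(a_{m},b_{m}))$ can be assembled, but you never exhibit it, and exhibiting it is the entire content of the hard inclusion of (a). The matrix must be $c_{ij}:=(a_{j},b_{i})$; its $i$-th row is the image of the witness $r$ (projecting onto $(a_{1},\dots,a_{m})$) under $\id_{A_{1}}\otimes\bc_{b_{i}}$, its $j$-th column is the image of the witness $s$ under $\bc_{a_{j}}\otimes\id_{A_{2}}$, and then the plain $m\times m$ case of transitivity (Lemma~\ref{A1a} is not needed) puts the diagonal into $\rho$. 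Note what this costs: you must use the maps $\id_{A_{1}}\otimes\bc_{b}$ and $\bc_{a}\otimes\id_{A_{2}}$, i.e.\ you need the constants to lie in $M_{1}$ and $M_{2}$, which the stated hypothesis ($\id_{A_{i}}\in M_{i}$ only) does not give you. This is not a removable detail: without constants rectangularity genuinely fails --- a linear order on $A_{1}\times A_{2}$ is a quasiorder invariant under $\{\id_{A_{1}}\}\otimes\{\id_{A_{2}}\}=\{\id_{A}\}$ but is not of the form $\rho_{1}\otimes\rho_{2}$ --- so your proof must either add this hypothesis (harmless in all of the paper's applications, and implicitly absorbed by the paper into its citation) or fall back on the reference as the paper does. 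A smaller ordering issue: transitivity and $M_{i}$-invariance of $\rho_{i}:=\pi_{i}(\rho)$ are most cleanly derived \emph{after} rectangularity, since lifting a matrix over $A_{1}$ to one over $A$ with all rows and columns in $\rho$ already presupposes $\rho=\rho_{1}\otimes\rho_{2}$.
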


\begin{proof}
 \eqref{A5a}: According to \cite[2.3.7]{PoeK79} and because the identity map
  belongs to $M_{i}$, for the invariant relations we have
  $\Inv_{A}(M_{1}\otimes M_{2})=(\Inv_{A_{1}}M_{1})\otimes
  (\Inv_{A_{2}}M_{2})$. Thus, in order to prove \eqref{A5a}, it only remains to show that 
  \begin{align*}
    \rho_{1}\otimes\rho_{2}\in\gQuord(A) \iff
    \rho_{1}\in\gQuord(A_{1}) \text{ and }\rho_{2}\in\gQuord(A_{2})
  \end{align*}
for $\rho_{1}\in\Rela[m](A_{1})$ and $\rho_{2}\in\Rela[m](A_{2})$. But this
follows from (notation see~\ref{A1})
\begin{align*}
  \rho_{1}\models (a_{ij})\text{ and } \rho_{2}\models (b_{ij})&\iff
  (\rho_{1}\otimes \rho_{2})\models ((a_{ij},b_{ij})),\text{ and}\\
 (a_{11},\dots,a_{mm})\in\rho_{1}\text{ and
  }&(b_{11},\dots,b_{mm})\in\rho_{2}\\&\iff 
((a_{11},b_{11}),\dots,(a_{mm},b_{mm}))\in\rho_{1}\otimes\rho_{2},
\end{align*}
what is clear from the definitions~\ref{A4}.

\eqref{A5b}: Since the trivial equivalence relations $\Delta_{A_{i}}$
and $\nabla_{A_{i}}$ belong to $\gQuord_{A_{i}}M_{i}$ ($i\in\{1,2\}$),
we can apply 
$(\Pol_{A_{1}}Q_{1})\otimes (\Pol_{A_{2}}Q_{2})=\Pol_{A}(Q_{1}\otimes Q_{2})$ 
from \cite[Satz 2.3.7(vi) and \textit{\"Ub} 2.4, p.73]{PoeK79}
(restricting to unary 
mappings, i.e., taking $\End$ instead of $\Pol$ and $Q_{i}=\gQuord
M_{i}$) in order to get the 
second equality in the following conclusions:
\begin{align*}
  \uclose{M_{1}}\otimes \uclose{M_{2}}
      &\stackrel{\ref{A3}}{=}(\End\gQuord M_{1})\otimes(\End\gQuord M_{2})\\ 
      &=\End((\gQuord M_{1})\otimes (\gQuord M_{2}))\\
      &\stackrel{\eqref{A5a}}{=}\End\gQuord(M_{1}\otimes M_{2})\\
      &\stackrel{\ref{A3}}{=}\uclose{M_{1}\otimes M_{2}}.
\end{align*}
\end{proof}

\begin{proposition}\label{A6} Let $M\subseteq A^{A}$ and $B\in \Inv M$
  for some $\emptyset\neq B\subset A$. Then
  \begin{align*}
\gQuord_{B}(M\restriction_{B})=(\gQuord_{A}M)\restriction_{B}.
  \end{align*}
\end{proposition}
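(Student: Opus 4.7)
The plan is to prove the two inclusions separately, using the natural candidate extension $\rho:=\sigma\cup\Delta_A^{(m)}$ for the harder direction, where $\Delta_A^{(m)}:=\{(a,\dots,a)\mid a\in A\}$ is the $m$-ary diagonal on $A$.

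For the inclusion $(\gQuord_A M)\restriction_B \subseteq \gQuord_B(M\restriction_B)$, I take $\rho\in\gQuord_A M$ of arity $m$ and verify that $\rho\cap B^m$ is a generalized quasiorder on $B$ preserved by $M\restriction_B$. Reflexivity on $B$ is immediate from reflexivity of $\rho$. For transitivity, any $m\times m$ matrix over $B$ witnessing the hypothesis of Definition~\ref{A1} for $\rho\cap B^m$ automatically witnesses it for $\rho$, so the diagonal lies in $\rho$; since its entries are in $B$, it lies in $\rho\cap B^m$. Preservation is clear: for $h\in M$ and $r\in\rho\cap B^m$ we have $h(r)\in\rho$ (as $h\in\End\rho$) and $h(r)\in B^m$ (as $B\in\Inv M$), so $h\restriction_B(r)\in\rho\cap B^m$.

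For the reverse inclusion, I fix $\sigma\in\gQuord_B(M\restriction_B)$, say of arity $m$, and set $\rho:=\sigma\cup\Delta_A^{(m)}$. First, $\rho\cap B^m=\sigma$: the inclusion $\sigma\subseteq\rho\cap B^m$ is trivial, and any $(a,\dots,a)\in\Delta_A^{(m)}\cap B^m$ has $a\in B$ hence already lies in $\sigma$ by reflexivity on $B$. Reflexivity of $\rho$ on $A$ is obvious. Preservation by $M$ is checked by cases on $r\in\rho$: if $r\in\sigma\subseteq B^m$, then $h\in M$ maps $r$ into $B^m$ and $h\restriction_B\in M\restriction_B$ preserves $\sigma$, so $h(r)\in\sigma\subseteq\rho$; if $r=(a,\dots,a)\in\Delta_A^{(m)}$, then $h(r)=(h(a),\dots,h(a))\in\Delta_A^{(m)}\subseteq\rho$.

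The main obstacle is transitivity of $\rho$ on $A$, which I handle by a case analysis on a matrix $(a_{ij})\in A^{m\times m}$ with $\rho\models(a_{ij})$. Each row and column lies in $\sigma$ or in $\Delta_A^{(m)}$. If some row is in $\Delta_A^{(m)}\setminus\sigma$, it is constant with some value $c\in A\setminus B$; then every column contains $c$, so no column can belong to $\sigma\subseteq B^m$, forcing every column to be constant equal to $c$, and thus the whole matrix equals $c$, giving the diagonal $(c,\dots,c)\in\Delta_A^{(m)}\subseteq\rho$. Otherwise every row lies in $\sigma$, all entries are in $B$, hence every column lies in $B^m$ and (being in $\rho$) lies in $\sigma$; transitivity of $\sigma\in\gQuord_B(M\restriction_B)$ then places the diagonal in $\sigma\subseteq\rho$. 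This shows $\rho\in\gQuord_A M$ with $\rho\restriction_B=\sigma$, completing the proof.
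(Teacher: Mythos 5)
Your proof is correct and follows essentially the same route as the paper: both extend $\sigma$ to $A$ by adjoining diagonal tuples (the paper writes $\rho:=\sigma\cup\{(a,\dots,a)\in A^{m}\mid a\in A\setminus B\}$, which is the same relation as your $\sigma\cup\Delta_{A}^{(m)}$ since $\sigma$ is reflexive on $B$) and verify transitivity by the same dichotomy on whether the witnessing matrix stays inside $B^{m\times m}$ or has a row/column meeting $A\setminus B$, which forces that row/column and then the whole matrix to be constant. The only difference is that the paper delegates the compatibility bookkeeping to the identity $\Inv_{B}(M\restriction_{B})=(\Inv_{A}M)\restriction_{B}$ cited from P\"oschel--Kalu\v{z}nin, whereas you check preservation of both $\rho\cap B^{m}$ and the extension $\rho$ directly, which is harmless and, if anything, more self-contained.
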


\begin{proof}
According to \cite[2.3.4]{PoeK79} we have
$\Inv_{B}(M\restriction_{B})=(\Inv_{A}M)\restriction_{B}$. Thus it remains to show that the generalized
quasiorders (which are special invariant relations) correspond to each
other, more precisely, we have to prove that
\begin{align*}
  \gQuord(B)=(\gQuord(A))\restriction_{B}.
\end{align*}
``$\subseteq$'': Let $\sigma\in\gQuorda[m](B)$ and 
$\rho:=\sigma\cup\{(a,\dots,a)\in A^{m}\mid a\in A\setminus B\}$.
Then $\sigma=\rho\restriction_{B}$. Moreover, $\rho$ is reflexive by
construction. To show transitivity, let $\rho\models (a_{ij})\in
A^{m\times m}$. If $(a_{ij})\in B^{m\times m}$, then
$\sigma\models(a_{ij})$ and we get
$(a_{11},\dots,a_{mm})\in\sigma\subseteq\rho$ (since $\sigma$ is
transitive).
If some row or column of $(a_{ij})$ contains an element $a\in
A\setminus B$, then by definition of $\rho$ this row or column must be
$(a,\dots,a)$. Thus $a_{ij}=a$ for all $i,j$, and the
diagonal obviously belongs to $\rho$. Thus $\rho$ is transitive, i.e.,
$\rho\in\gQuord(A)$.

``$\supseteq$'': If $\rho\in\gQuord(A)$ then
$\sigma:=\rho\restriction_{B}\in\Rel(B)$ is obviously reflexive (on
$B$) and also transitive (since each matrix $(b_{ij})\in B^{m\times m}$ can
be considered as a matrix in $A^{m\times m}$). Thus $\sigma\in\gQuord(B)$.
\end{proof}

\begin{remark}\label{R1}
  We do not consider here the other side of the Galois connection, i.e.,
  the Galois closures of the form $\gQuord \End Q$ for $Q\subseteq\gQuord(A)$. In general,
they are not relational clones (contrary to the Galois connection
$\End -\Inv$). In particular, $\Quord(A)$ is not a
relational clone. It contains all diagonal relations and is closed
under several relational clone operations, but, e.g., not under $\pr$
(i.e., deleting of coordinates). 
For example, the relation
$\rho:=\{(0,0,0),(1,1,1),(2,2,2),(2,0,1),(1,1,2)\}$ on $A=\{0,1,2\}$
is a generalized quasiorder (this is easy to check), but
$\pr_{2,3}(\rho)=\{(x,y)\mid \exists
a:(a,x,y)\in\rho\}=\{(0,0),(1,1),(2,2),(0,1),(1,2)\}$ is not (because
it is not transitive).
\end{remark}

\section{Minimal u-closed monoids}\label{secC}

In this section we investigate some special monoids and their u-closure. For a unary
function $f\in A^{A}$ let $M_{f}:=\Sg{f}\cup C$. This is the
least monoid containing $f$ and all constants.
What can be said about the u-closure of such monoids $M_{f}$? 

In the following we have to deal much with the relation $\Gamma_{M}$
for a monoid $M=M_{f}$ and with the situation that $\Gamma_{M}\models
V$ for some $k\times k$-matrix $V=(v_{ij})$, $k:=|A|$. Therefore it is
convenient to identify a $g\in M$ with the vector
$\br_{g}=(ga_{1},\dots,ga_{k})$ (cf.~\ref{M5B}, here we assume
$A=\{a_{1},\dots,a_{k}\}$ where $A$ is implicitly ordered by the
indices of $a_{i}$). Thus we can say that a row or column $\br$
of $V$ equals some ``vector'' ($k$-tuple) $g\in M$ and write
$\br=g$ meaning $\br=(ga_{1},\dots,ga_{k})$. This will
be used very often in the proofs (in great detail in the proof of
\ref{Btriv}). Furthermore, let $\bv_{i,*}:=(v_{i1},\dots,v_{ik})$ and
$\bv_{*,i}:=(v_{1i},\dots,v_{ki})$ denote the $i$-th row and the $i$-th
column of $V=(v_{ij})$, respectively ($i\in\{1,\dots,k\}$).
Note that $\Gamma_{M_{f}}$ is reflexive since $M_{f}$ contains all
constants.

For the
trivial monoid $T:=M_{\id_{A}}=\{\id_{A}\}\cup C$ we have:

\begin{proposition}\label{Btriv}
  The monoid $T=\{\id_{A}\}\cup C$ is u-closed.
\end{proposition}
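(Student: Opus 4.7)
The plan is to invoke Proposition~\ref{M7}, specifically the equivalence (i)$\iff$(iv): $T$ is u-closed if and only if $\Gamma_T$ is a generalized quasiorder. So everything reduces to showing that $\Gamma_T$ is reflexive and (generalized-)transitive.

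First, I would write down $\Gamma_T$ explicitly. With $A=\{a_1,\dots,a_k\}$, the monoid $T=\{\id_A\}\cup C$ yields
\[
\Gamma_T = \{\br_{\id_A}\} \cup \{(a,a,\dots,a) \mid a\in A\},
\]
where $\br_{\id_A}=(a_1,\dots,a_k)$. Reflexivity is immediate, since all constant tuples are present. The nontrivial task is transitivity: given a $k\times k$ matrix $V=(v_{ij})$ with $\Gamma_T\models V$, I must show that the diagonal $(v_{11},\dots,v_{kk})$ lies in $\Gamma_T$. Since every row and every column of $V$ is either $\br_{\id_A}$ or constant, I would let $S$ (resp.\ $R$) be the set of indices $i$ (resp.\ $j$) such that row $i$ (resp.\ column $j$) equals $\br_{\id_A}$.

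The key observation is: if $i\in S$ and $j\in R$, then the row forces $v_{ij}=a_j$ and the column forces $v_{ij}=a_i$, so $i=j$. This pins down $S$ and $R$ sharply and leads to three cases. (a) If $S=R=\emptyset$, all rows and columns are constant, so $v_{ij}=c_i=d_j$ for all $i,j$, which forces a single constant value throughout $V$; the diagonal is then constant and belongs to $\Gamma_T$. (b) If exactly one of $S,R$ is empty, say $R=\emptyset\neq S$, pick $i\in S$: then $v_{ij}=a_j$ for all $j$, and since each column is constant it must be $(a_j,\dots,a_j)$, so $v_{\ell j}=a_j$ for all $\ell,j$ and the diagonal equals $\br_{\id_A}$. (c) If $S,R$ are both nonempty, the observation forces $S=R=\{i_0\}$ for a single $i_0$; a short computation then shows $v_{\ell,i_0}=a_\ell$ for all $\ell$, $v_{i_0,j}=a_j$ for all $j$, and (for $\ell,j\neq i_0$) the constraints from row $\ell$ and column $j$ are consistent only when $\ell=j$, so in any case the diagonal is again $\br_{\id_A}$.

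The only potential obstacle is the bookkeeping in case (c): one must check that the constant rows/columns outside of index $i_0$ are forced to take the values $a_\ell$ and $a_j$ respectively (which may make the case vacuous for $k\geq 3$, but harmless regardless). Once the key observation is used, all three cases deliver a diagonal in $\Gamma_T$, establishing transitivity and completing the proof.
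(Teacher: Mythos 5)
Your proposal is correct and follows essentially the same route as the paper: both reduce the claim via Proposition~\ref{M7}\eqref{M7iv} to showing that $\Gamma_{T}$ is a generalized quasiorder and then verify transitivity by a direct case analysis on the matrix $V$. The only difference is bookkeeping: the paper splits on whether some diagonal entry $v_{jj}$ differs from $a_{j}$ (which forces the whole matrix to be constant), whereas you split on which rows and columns equal $\br_{\id_{A}}$; both analyses are sound and reach the same conclusion.
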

\begin{proof} Let $A=\{a_{1},\dots,a_{k}\}$.
  We show that $\Gamma_{T}$ is a generalized quasiorder (then
  we are done due to \ref{M7}\eqref{M7iv}). $\Gamma_{T}$
  is reflexive, thus it remains to show that
  $\Gamma_{T}$ is transitive. Let $V=(v_{ij})_{i,j\in \{1,\dots,k\}}$ be a
  $k\times k$-matrix such that $\Gamma_{T}\models V$, i.e., each row
  and each column  is one of the ``vectors'' $g\in T$, namely
 $\id_{A}=(a_{1},\dots,a_{k})$ or one of the constants $\bc_{1}=(a_{1},\dots,a_{1})$,\dots, $\bc_{k}=(a_{k},\dots,a_{k})$
  ($\bc_{i}$ denotes the constant mapping $\bc_{i}(x)=a_{i}$). If
  $v_{jj}=a_{i}$ for some $i\neq j$, then $\Gamma_{T}\models V$ can
  hold only if all rows and columns are equal to the constant $\bc_{i}$
  (since $\bc_{i}$ is the only vector where $a_{i}$ is on the $j$-th place),
  in particular, the main diagonal of $V$ also equals $\bc_{i}$ and
  therefore belongs to $\Gamma_{T}$. It remains the case
  $v_{ii}=a_{i}$ for all $i\in\{1,\dots,k\}$. Then the diagonal of $V$
  is $\id_{A}$, also belonging to $\Gamma_{T}$. Consequently,
  $\Gamma_{T}$ is transitive.
\end{proof}

For $|A|=2$ there exist only two monoids containing all constants,
namely $T$ and $A^{A}$, both are u-closed (the first by~\ref{Btriv},
the second trivially). Therefore, in the
following, we always can assume $|A|\geq 3$.

We are going to characterize the \New{minimal u-closed monoids}, i.e.,
u-closed monoids $M\leq
A^{A}$ which properly contain no other u-closed monoid except the
\New{trivial monoid}  $T=\{\id_{A}\}\cup C$. Such minimal u-closed
monoids must be generated by 
a single function, i.e., they must be of the form $\uclose{M_{f}}$ for
some unary $f$, moreover, $M_{f}$ can be assumed to be
\New{$C$-minimal}, i.e., minimal among
all monoids properly containing $T$ (otherwise $M_{f'}< M_{f}$ would imply
$\uclose{M_{f'}}\leq\uclose{M_{f}}$ and $\uclose{M_{f}}$ could be
canceled in the list of minimal u-closed monoids). 

It is well-known which unary functions $f$ generate a $C$-minimal monoid
$M_{f}\leq A^{A}$ (it follows, e.g., from \cite[4.1.4]{PoeK79}),
namely if and only if
 $f\in A^{A}$ is a \textsl{nontrivial} (i.e., $f\notin T$) function satisfying 
one of the following conditions:
\begin{enumerate}[\rm(i)]
\item $f^{2}=f$,
\item $f^{2}$ is constant,
\item $f$ is a permutation, such that $f^{p}=\id_{A}$ for some prime
  number $p$.
\end{enumerate}
As shown in \cite[Theorem~3.1]{JakPR2016}, among these functions are those for
which the quasiorder lattice $\Quord f$ is ma\-ximal among all
quasiorder lattices (on $A$), 
equivalently, for which $\End\Quord f$ is minimal (among all
endomorphism monoids of quasiorders). These functions are of
so-called type \rmI, \rmII{} and \rmIII, defined as follows:
\begin{enumerate}[\rm(I)]
\item $f^{2}=f$,
\item $f^{2}$ is constant, say $v$, and $|\{x\in A\mid fx=v\}|\geq 3$,
\item $f$ is a permutation with at least two cycles of length $p$, such that $f^{p}=\id_{A}$ for some prime number $p$.
\end{enumerate}
Note that $\Sg{f}=\{\id_{A},f\}$ for $f$ of type
  \rmI{} and \rmII, and $\Sg{f}=\{\id_{A},f,f^{2},\dots,f^{p-1}\}$ is
  a cyclic group of prime order for $f$ of type \rmIII.

Surprisingly it turns out (see Theorem~\ref{B0min}) that for each candidate $M_{f}$ with $f$
satisfying (i)--(iii), the u-closure $\uclose{M_{f}}$ is either not a
minimal u-closed monoid or $M_{f}$ itself is already u-closed. Thus
the minimal u-closed monoids coincide with the u-closed $C$-minimal
monoids.
 We start with
the functions of type \rmI, \rmII{} and \rmIII.

\begin{proposition}\label{B0}
  Let $f$ be a function of type \rmI, \rmII{} or \rmIII. Then $M_{f}$
  is a minimal u-closed monoid, in particular
  $M_{f}=\uclose{M_{f}}$. Moreover we have
  $\End\gQuord M_{f}=\End\Quord M_{f}$.
\end{proposition}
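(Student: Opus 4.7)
The plan is to sandwich $M_f$ between itself and $\End\Quord M_f$ via $\End\gQuord M_f$, and then to collapse the entire chain by appealing to \cite[Theorem~3.1]{JakPR2016}.

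First I would set up the inclusions
\[
M_f \;\subseteq\; \End\gQuord M_f \;\subseteq\; \End\Quord M_f,
\]
where the left one is extensivity of the Galois closure $M\mapsto\End\gQuord M$, and the right one comes from the fact that every compatible quasiorder is a binary generalized quasiorder (Example~\ref{A1Ex}(i)), so $\Quord M_f\subseteq \gQuord M_f$. Observing moreover that constants preserve every reflexive binary relation, one has $\Quord M_f=\Quord f$, hence $\End\Quord M_f=\End\Quord f$.

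Next, for $f$ of type \rmI, \rmII, or \rmIII, the cited theorem from \cite{JakPR2016} asserts that $\End\Quord f$ is a minimal (atomic) endomorphism monoid of quasiorders above the trivial monoid $T$. Combining this with the $C$-minimality of $M_f$ (recalled in the excerpt just before the statement) and with $T\subsetneq M_f\subseteq \End\Quord f$, the whole chain is forced to collapse, yielding
\[
M_f=\End\gQuord M_f=\End\Quord M_f.
\]
The second equality is already the ``Moreover'' assertion, while Theorem~\ref{A3} turns the first into $\uclose{M_f}=M_f$, so $M_f$ is u-closed. Minimality of $M_f$ as a u-closed monoid is then immediate from its $C$-minimality: any u-closed monoid strictly between $T$ and $M_f$ would contradict the latter.

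The main delicate point is justifying the collapse $M_f=\End\Quord f$, since $M_f$ is not a priori a Galois-closed endomorphism monoid; this has to be extracted carefully from \cite{JakPR2016}, where the minimality of $\End\Quord f$ in its lattice, together with the $C$-minimality of $M_f$, excludes the strict inclusion $M_f\subsetneq \End\Quord f$. Once this squeeze is granted, the rest of the argument is purely formal and type-free, handling types \rmI, \rmII, and \rmIII{} uniformly.
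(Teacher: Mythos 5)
Your overall architecture is the same as the paper's: the chain $M_{f}\subseteq\End\gQuord M_{f}\subseteq\End\Quord M_{f}$, collapse of the chain, u-closedness via Theorem~\ref{A3}, and minimality from $C$-minimality. The last three steps are fine. The problem is the step you yourself flag as delicate: the collapse $M_{f}=\End\Quord M_{f}$ does \emph{not} follow from the combination of (a) minimality of $\End\Quord f$ among endomorphism monoids of quasiorders and (b) $C$-minimality of $M_{f}$. Statement (a) only constrains which \emph{Galois-closed} monoids (i.e., monoids of the form $\End Q$ with $Q\subseteq\Quord(A)$) lie below $\End\Quord f$, and statement (b) only constrains which monoids lie \emph{below} $M_{f}$. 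Neither excludes the configuration $T\subsetneq M_{f}\subsetneq\End\Quord f$ with $M_{f}$ not Galois closed; the two minimality properties simply do not interact to force equality. So the ``squeeze'' you rely on is not a valid deduction from \cite[Theorem~3.1]{JakPR2016} as stated.

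What is actually needed is the explicit identity $\End\Quord M_{f}=\Sg{f}\cup C$ for $f$ of type \rmI, \rmII{} or \rmIII{}, which is a concrete computation of the endomorphism monoid of the quasiorder lattice of a monounary algebra. The paper imports it as a black box from \cite[Theorem~2.1(B)]{JakPR2023} (noting it also follows from \cite{Jak1982}, \cite{Jak1983} and \cite[Prop.~4.8]{JakPR2018}); once that is granted, everything else in your argument goes through verbatim. So either cite that result directly, or supply an independent proof that no $g\notin\Sg{f}\cup C$ preserves all quasiorders of $f$ — but some such input is indispensable, and your proposed derivation of it is the gap.
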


\begin{proof}
  Clearly, $\Sg{f}\cup C=M_{f}\subseteq\End\gQuord
  M_{f}\subseteq\End\Quord M_{f}$. But we have  $\End\Quord
  M_{f}=\Sg{f}\cup C$ as it was explicitly stated in
  \cite[Theorem~2.1(B)]{JakPR2023} 
  (but it already follows from the results in \cite{Jak1982},
  \cite{Jak1983} and also from \cite[Prop.~4.8]{JakPR2018}). Thus we have
  equality instead of the above inclusions and  $M_{f}$ is
  u-closed (by Theorem~\ref{A3}). Since $M_{f}$ has no proper submonoids except $T$ because
  $f$ satisfies one of the above conditions (i)--(iii), it is a
  minimal u-closed monoid. 
\end{proof}

\begin{theorem}\label{B0min} Let $3\leq|A|<\infty$.
The minimal u-closed monoids $M\le A^{A}$ are exactly
  those of the form $M_{f}=\Sg{f}\cup C$ where $f\in A^A$ is nontrivial and satisfies
  \begin{itemize}
\item[\rm(I)] $f^2=f$, or
\item[\rm(II$'$)] $f^2$ is a constant and $|A|\geq 4$,
  or
\item[\rm(III$'$)] $f^p=\id_A$ for some prime $p$ such that $f$ has at
  least two fixed points or $f$ is of type~\rmIII.
  \end{itemize}
In particular, each minimal u-closed monoid is $C$-minimal, too.
\end{theorem}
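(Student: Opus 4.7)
The plan is to reduce to $C$-minimal monoids and then decide which of them are u-closed. For the reduction, let $M$ be a minimal u-closed monoid; since $T$ is u-closed by Proposition~\ref{Btriv} and $M \supsetneq T$, pick $f \in M \setminus T$ minimizing $|M_{f}|$, so that $M_{f}$ is $C$-minimal. Then $T < \uclose{M_{f}} \leq \uclose{M} = M$ forces $\uclose{M_{f}} = M$ by minimality of $M$. Hence every minimal u-closed monoid is the u-closure of some $C$-minimal $M_{f}$ with $f$ satisfying one of the defining conditions (i), (ii), (iii). It suffices, then, to identify which such $M_{f}$ are themselves u-closed (in which case $M_{f} = \uclose{M_{f}} = M$) and, for the rest, to show $\uclose{M_{f}}$ is not minimal. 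By Proposition~\ref{B0}, the types I $=$ (i), II, and III yield u-closed $M_{f}$; only the residual cases inside (ii) and (iii) remain.

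For the excluded configurations, I would exhibit matrix counterexamples to transitivity of $\Gamma_{M_{f}}$ and invoke Proposition~\ref{M7}\eqref{M7iv}. In case (ii) with $|A| = 3$, necessarily $A = \{v, w, u\}$ with $f(v) = f(w) = v$ and $f(u) = w$; the $3 \times 3$ matrix whose rows are $\bc_{v}, f, \id_{A}$ has those same tuples as columns, so it satisfies $\Gamma_{M_{f}} \models V$, yet its diagonal $(v, v, u)$ does not lie in $\Gamma_{M_{f}}$. Since the diagonal entry $g := (v, v, u)$ is idempotent (type I), $M_{g}$ is u-closed with $T < M_{g} \lneq \uclose{M_{f}}$ (strict as $f \notin M_{g}$), so $\uclose{M_{f}}$ is not minimal. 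Case (iii) with a single $p$-cycle and $\leq 1$ fixed points is analogous: for $p = 2$, $f = (2,1,3)$ on $\{1,2,3\}$, the matrix with rows $\id_{A}, f, \bc_{3}$ has columns $\id_{A}, f, \bc_{3}$ and diagonal $(1,1,3)$, an idempotent outside $M_{f}$; for pure $p$-cycles with $p \geq 3$, a matrix whose rows are successive powers of $f$ yields an involution outside $M_{f}$, and iterating the construction inside $\uclose{M_{f}}$ eventually produces a type~I idempotent, giving the required strictly smaller u-closed submonoid.

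For the positive direction (that $M_{f}$ is u-closed in (II') $\setminus$ II and in (III') $\setminus$ III), I would verify directly that $\Gamma_{M_{f}}$ is a generalized quasiorder by a finite case analysis on matrices $V$ with $\Gamma_{M_{f}} \models V$. In (II') with $|f^{-1}(v)| = 2$ and $|A| \geq 4$, one has $A = \{v, w, u_{1}, \ldots, u_{|A|-2}\}$ with $f(u_{i}) = w$, and the rows of any admissible $V$ lie in $\{\id_{A}, f\} \cup C$: if some row is $\id_{A}$, then each column that meets a distinct $u_{i}$ is forced to be the constant $\bc_{u_{i}}$, which in turn forces every row to be $\id_{A}$ and the diagonal to be $\id_{A}$; otherwise the rows lie in $\{f\} \cup C$ and the column constraints force the diagonal into $\{f\} \cup C$. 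Case (III') with $\geq 2$ fixed points is handled similarly: the fixed-point coordinates behave like constant positions and pin down the diagonal of $V$ (whose rows lie in $\{\id_{A}, f, \dots, f^{p-1}\} \cup C$) to be a power of $f$ or a constant. The main obstacle is this positive case analysis, where the thresholds $|A| \geq 4$ and $\geq 2$ fixed points are precisely the combinatorial conditions that preclude the counterexample matrices from the excluded cases; verifying it requires carefully enumerating the possible row/column shapes of $V$ subject to $\Gamma_{M_{f}} \models V$.
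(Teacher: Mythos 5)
Your overall strategy coincides with the paper's: reduce to $C$-minimal monoids $M_{f}$, invoke Proposition~\ref{B0} for types \rmI, \rmII, \rmIII, verify transitivity of $\Gamma_{M_{f}}$ by direct matrix analysis in the remaining positive cases, and, for the excluded cases, use Corollary~\ref{M8} to place a diagonal $\Delta h$ of a good type inside $\uclose{M_{f}}$, yielding a proper u-closed submonoid. Your reduction to $C$-minimal $M_{f}$ is a correct (and slightly more explicit) version of the paper's, and your matrices for case (ii) with $|A|=3$ and for $p=2$ are exactly the paper's binary functions $h$.

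Two points are genuine gaps. First, for a pure $p$-cycle with $p\geq 5$ the diagonal of the matrix whose rows are $f^{0},f^{1},\dots,f^{p-1}$ is the map $x\mapsto 2x$ on $\Z_{p}$, which is \emph{not} an involution (its order is the multiplicative order of $2$ modulo $p$, e.g.\ $4$ for $p=5$), and your claim that iterating the construction ``eventually produces a type \rmI{} idempotent'' is asserted without argument; it is not clear that repeated applications of $\Delta$ ever leave the group part. The paper instead observes that $\Sg{x\mapsto 2x}$ contains an element $g'$ of prime order $q$ dividing $p-1$; since $p\geq 5$ forces $q\leq (p-1)/2$, $g'$ has at least two $q$-cycles (or, when a fixed point $z$ is present, at least two fixed points), so $g'$ is of type \rmIII{} or \rmIII$'$ and $M_{g'}$ is u-closed by the positive part, which suffices. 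Second, that positive part --- transitivity of $\Gamma_{M_{f}}$ for $f$ of type \rmII$'$ but not \rmII, and of type \rmIII$'$ but not \rmIII{} --- is the bulk of the paper's proof (a detailed row/column case distinction in which the hypotheses $|A|\geq 4$ and ``at least two fixed points'' are each used at a specific step to force a contradiction), and you have only outlined it. As written, your argument settles the negative half of the theorem but leaves the positive half as a plan.
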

\begin{proof}
\textsc{Part 1:} At first we show that $M_{f}$ is u-closed for  all functions of type
\rmI, and of the new type \rmII$'$ or \rmIII$'$. Because of Proposition~\ref{B0}, it
remains to check only those functions which are of type \rmII$'$ or
\rmIII$'$, but not of type \rmII{} or \rmIII{}, respectively.

\underline{Case 1:} \textit{$f$ is of type \rmII$'$ but not of type \rmII}, i.e., $f^{2}$
is constant, denoted by $1$, $|\{x\in A\mid fx=1\}|=2$ and $|A|\geq 4$.

For simplicity we denote the elements of $A$ by numbers,
$A=\{1,2,\dots,k\}$, where $f1=1$ and $f2=1$ (otherwise $fx=2$),
$k\geq 4$. 
Thus $f$ has the form as given in Figure~\ref{Fig2}(a).
Observe that $M_{f}=\{\id_{A},f,\bc_{1},\dots,\bc_{k}\}$ ($\bc_{i}$
denotes the constant function $i$).

\begin{figure} \begin{center}
\begin{picture}(0,0)%
\includegraphics{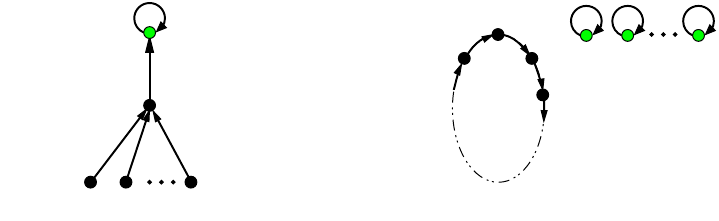}%
\end{picture}%
\setlength{\unitlength}{4144sp}%
\begingroup\makeatletter\ifx\SetFigFont\undefined%
\gdef\SetFigFont#1#2#3#4#5{%
  \reset@font\fontsize{#1}{#2pt}%
  \fontfamily{#3}\fontseries{#4}\fontshape{#5}%
  \selectfont}%
\fi\endgroup%
\begin{picture}(5466,1675)(436,-3010)
\put(4861,-1771){\makebox(0,0)[lb]{\smash{{\SetFigFont{12}{14.4}{\rmdefault}{\mddefault}{\updefault}{\color[rgb]{0,0,0}$z_{1}$}%
}}}}
\put(5176,-1771){\makebox(0,0)[lb]{\smash{{\SetFigFont{12}{14.4}{\rmdefault}{\mddefault}{\updefault}{\color[rgb]{0,0,0}$z_{2}$}%
}}}}
\put(5716,-1771){\makebox(0,0)[lb]{\smash{{\SetFigFont{12}{14.4}{\rmdefault}{\mddefault}{\updefault}{\color[rgb]{0,0,0}$z_{m}$}%
}}}}
\put(4186,-1501){\makebox(0,0)[lb]{\smash{{\SetFigFont{12}{14.4}{\rmdefault}{\mddefault}{\updefault}{\color[rgb]{0,0,0}$0$}%
}}}}
\put(3466,-1771){\makebox(0,0)[lb]{\smash{{\SetFigFont{12}{14.4}{\rmdefault}{\mddefault}{\updefault}{\color[rgb]{0,0,0}$p-1$}%
}}}}
\put(4591,-1771){\makebox(0,0)[lb]{\smash{{\SetFigFont{12}{14.4}{\rmdefault}{\mddefault}{\updefault}{\color[rgb]{0,0,0}$1$}%
}}}}
\put(1351,-1681){\makebox(0,0)[lb]{\smash{{\SetFigFont{12}{14.4}{\rmdefault}{\mddefault}{\updefault}{\color[rgb]{0,0,0}$1$}%
}}}}
\put(2656,-1681){\makebox(0,0)[lb]{\smash{{\SetFigFont{14}{16.8}{\rmdefault}{\mddefault}{\updefault}{\color[rgb]{0,0,0}(b)}%
}}}}
\put(1081,-2941){\makebox(0,0)[lb]{\smash{{\SetFigFont{12}{14.4}{\rmdefault}{\mddefault}{\updefault}{\color[rgb]{0,0,0}$3$}%
}}}}
\put(1846,-2941){\makebox(0,0)[lb]{\smash{{\SetFigFont{12}{14.4}{\rmdefault}{\mddefault}{\updefault}{\color[rgb]{0,0,0}$k$}%
}}}}
\put(1351,-2941){\makebox(0,0)[lb]{\smash{{\SetFigFont{12}{14.4}{\rmdefault}{\mddefault}{\updefault}{\color[rgb]{0,0,0}$4$}%
}}}}
\put(1351,-2176){\makebox(0,0)[lb]{\smash{{\SetFigFont{12}{14.4}{\rmdefault}{\mddefault}{\updefault}{\color[rgb]{0,0,0}$2$}%
}}}}
\put(496,-1861){\makebox(0,0)[lb]{\smash{{\SetFigFont{10}{12.0}{\rmdefault}{\mddefault}{\updefault}{\color[rgb]{0,0,0}type \rmII$'$}%
}}}}
\put(2701,-1861){\makebox(0,0)[lb]{\smash{{\SetFigFont{10}{12.0}{\rmdefault}{\mddefault}{\updefault}{\color[rgb]{0,0,0}type \rmIII$'$}%
}}}}
\put(4681,-2131){\makebox(0,0)[lb]{\smash{{\SetFigFont{12}{14.4}{\rmdefault}{\mddefault}{\updefault}{\color[rgb]{0,0,0}$2$}%
}}}}
\put(451,-1681){\makebox(0,0)[lb]{\smash{{\SetFigFont{14}{16.8}{\rmdefault}{\mddefault}{\updefault}{\color[rgb]{0,0,0}(a)}%
}}}}
\end{picture}%

  \caption{The function $f$ for Case~1 and Case~2 in the proof of \ref{B0min}}\label{Fig2}
\end{center}
\end{figure}

As in the proof of \ref{Btriv} it is enough to show that
$\Gamma_{M_{f}}$ is transitive. Assume $\Gamma_{M_{f}}\models V$ for a
matrix $V=(v_{ij})_{i,j\in A}$, i.e., the rows and colums of $V$ all are of the form
 $\id_{A}=(1,2,3,\dots,k)$, $f=(1,1,2,\dots,2)$ or
$\bc_{i}=(i,i,i,\dots,i)$ ($i\in\{1,\dots,k\}$). 
We have to show that the diagonal $d_{V}:=(v_{11},\dots,v_{mm})$
belongs to $\Gamma_{M_{f}}$. Step by step we reduce the cases to be
checked.

(a) We start with $v_{11}=i\neq 1$ for some $i\in\{2,\dots,k\}$. Then
$\bv_{1,*}=\bc_{i}$  (otherwise $\bv_{1,*}\notin\Gamma_{M_{f}}$), thus,
for each $j\in\{2,\dots,k\}$ we have
$v_{1j}=i$ what implies $\bv_{*,j}=\bc_{i}$. Consequently
$d_{V}=\bc_{i}\in \Gamma_{M_{f}}$ and we are done.

(b) Now we can assume $v_{11}=1$. Then $\bv_{1,*},
\bv_{*,1}\in\{\id_{A},f,\bc_{1}\}$ what implies
$\bv_{2,*}\in\{\id_{A},f,\bc_{1}\}$ and therefore we have
$v_{12},v_{21},v_{22}\in\{1,2\}$.

Let \framebox{$v_{22}=1$}. Then $\bv_{*,2},\bv_{2,*}\in\{f,\bc_{1}\}$ (because $f$ and
$\bc_{1}$ are the only elements of $\Gamma_{M_{f}}$ with value $1$ in
the second component), in particular $v_{2i}\in\{1,2\}$ for all $i$.

If $\bv_{*,i}=\bc_{j}$ is constant for some $i\geq 3$, then
$j\in\{1,2\}$ (because $v_{2i}\in\{1,2\}$) and all rows $\bv_{\ell,*}$
must be equal to $\bc_{j}$ for all $\ell\geq 3$, consequently
$d_{V}=(1,1,j,\dots,j)\in\Gamma_{M_{f}}$ for $j\in\{1,2\}$.

If $\bv_{*,i}=f$ for some $i\geq 3$, then all rows $\bv_{\ell,*}$
must be equal to $f$ for all $\ell\geq 3$ (in no other element of
$\Gamma_{M_{f}}$ appears $2$ at the $i$-th place), consequently
$d_{V}=(1,1,2,\dots,2)\in\Gamma_{M_{f}}$. The same arguments apply for
the cases $\bv_{i,*}\in\{f,\bc_{j}\}$ for some $i\geq 3$ (change the
role of rows and columns). 

Thus it remains to consider the case that
all $\bv_{*,i}$ and $\bv_{i,*}$ ($i\geq 3$) are neither $f$ nor some
$\bc_{j}$. However then all these
columns and rows were equal to $\id_{A}$, but this cannot appear because, e.g.,
$\bv_{3,*}=\id_{A}$ and $\bv_{*,4}=\id_{A}$ would give $v_{34}=4$ and $v_{34}=3$,
respectively, a contradiction. Note that here is used the fact $k\geq 4$.

Now let \framebox{$v_{22}=2$}. Then $\bv_{2,*}\in\{\id_{A},\bc_{2}\}$.

If $\bv_{2,*}=\id_{A}$, then we must have $\bv_{*,j}=\bc_{j}$ for $j\geq 3$, thus
$d_{V}=(1,2,3,\dots,k)\in\Gamma_{M_{f}}$.
If $\bv_{2,*}=\bc_{2}$, then we must have $\bv_{*,1}=\id_{A}$ (recall
$v_{11}=1$). Consequently, $\bv_{j,*}=\bc_{j}$ for $j\geq 3$ and we also
get $d_{V}=(1,2,3\dots,k)\in\Gamma_{M_{f}}$.

\underline{Case 2:} \textit{$f$ is of type \rmIII$'$ but not of type \rmIII},
i.e., $f^{p}=\id_{A}$ for some prime $p$ and the permutation $f$ has
only one cycle of length $p$ but $m$ fixed points $z_{1},\dots,z_{m}$
where $m\geq 2$.

For simplicity let $A=\{0,1,\dots,p-1,z_{1},\dots,z_{m}\}$ where
$0,1,\dots,p-1$ denote the elements of the cycle, i.e., 
$f=(0\,1\,\dots\, p-1)(z_{1})\ldots(z_{m})$, moreover let $k:=p+m=|A|$.
Thus $\Gamma_{M_{f}}$ consists of the $n$-tuples 
$f^{i}=(i,i+1,\dots,i+p-1,z_{1},\dots,z_{m})$
($i\in\Z_{p}=\{0,1,\dots,p-1\}$, all counting in $\Z_{p}$ is done modulo
$p$) and all constants $\bc_{a}=(a,a,\dots,a)$, $a\in A$.

We have to show that $\Gamma_{M_{f}}$ is transitive. Thus let
$\Gamma_{M_{f}}\models V$ where $V$ is an $(k\times k)$-matrix
$V=(v_{ij})_{i,j\in A}$ (here we
enumerate the rows and columns by the elements of $A$).

If $v_{00}=z$ is a fixed point $z\in\{z_{1},\dots,z_{m}\}$ then all
columns and rows of $V$ (as elements of $\Gamma_{M_{f}}$) must be
equal to $\bc_{z}$, thus $d_{V}=(z,\dots,z)\in\Gamma_{M_{f}}$.

Let $v_{00}=i$ for some $i\in\Z_{p}$. Then $v_{*,0}\in\{\bc_{i},f^{i}\}$.

Assume $v_{*,0}=\bc_{i}$. If there exists some row $\bv_{j,*}=f^{i}$
(for some $j\in\Z_{p}$), then $v_{j,z}=z$ and therefore $\bv_{*,z}=z$
for each $z\in\{z_{1},\dots,z_{m}\}$. Thus the last $m$
columns are all different, what implies $\bv_{a,*}=f^{i}$ for all
$a\in A$ (here we need $m\geq 2$).
 Consequently,
 $d_{V}=(i,i+1,\dots,i+p-1,z_{1},\dots,z_{m})\in\Gamma_{M_{f}}$.

Otherwise (if such a row $\bv_{*,j}=f^{i}$ does not exist), all rows
$\bv_{j,*}$ must be equal to $\bc_{i}$ ($j\in\Z_{p}$), what implies
$\bv_{*,z}=\bc_{i}$ for $z\in\{z_{1},\dots,z_{m}\}$, consequently 
$d_{V}=(i,\dots,i,\dots,i)\in\Gamma_{M_{f}}$.
The same arguments apply to the case $v_{0,*}=\bc_{i}$ resulting in
$d_{V}\in\Gamma_{M_{f}}$. 

Thus it remains to consider the case
$\bv_{*,0}=f^{i}$ \textbf{and} $\bv_{0,*}=f^{i}$. However, this case
cannot occur since then $\bv_{z_{1},*}=\bc_{z_{1}}$ and
$\bv_{*,z_{2}}=\bc_{z_{2}}$ leads to the contradiction
$v_{z_{1},z_{2}}=z_{1}$ and $v_{z_{1},z_{2}}=z_{2}$ (note $m\geq 2$).

\textsc{Part 2:} Now we show that there are no more minimal u-closed monoids
than those of type \rmI, \rmII$'$ and \rmIII$'$. There are only the
following two cases (A) and (B) for functions $f$ to be considered for which
$M_{f}$ is $C$-minimal (i.e., satisfies (i)--(iii))
but which are not of type \rmI, \rmII$'$ or \rmIII$'$. We are going to
show that for these $f$ the u-closure $\uclose{M_{f}}$ is not minimal
what will finish the proof of the Theorem.

Case (A): \textit{$f^{2}$ is constant and $|A|=3$.}

There is only one (up to isomorphism) such function $f$ on a
$3$-element set and we use the notation from Figure~\ref{Fig3}(A). Then
$M_{f}=\{\id_{A},f,\bc_{0},\bc_{1},\bc_{2}\}$. Consider the binary mapping
$h$ defined by the following table:
\begin{align*}
  \begin{array}[b]{c|ccc||c}
    h & 0&1&2&\in\Gamma_{M_{f}}\\\hline
    0 & 0&0&0&\bc_{0}\\
    1 & 0&0&1&f\\
    2 & 0&1&2&\id_{A}\\\hline
  \end{array}
\end{align*}
Clearly $h\in M_{f}^{*}$ (as indicated in the last column). Therefore
(cf.~\ref{M8})
$g:=\Delta h\in \uclose{M_{f}}$ where $g$ (see Figure~\ref{Fig3}(b)) is a
function of type \rmI. Thus, by \ref{B0}, we get
$\uclose{M_{g}}=M_{g}\subset \uclose{M_{f}}$, i.e., $\uclose{M_{f}}$ is
not minimal u-closed.

\begin{figure} \begin{center}
\begin{picture}(0,0)%
\includegraphics{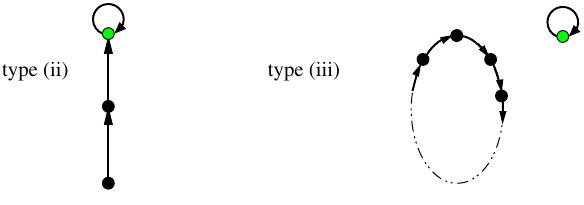}%
\end{picture}%
\setlength{\unitlength}{4144sp}%
\begingroup\makeatletter\ifx\SetFigFont\undefined%
\gdef\SetFigFont#1#2#3#4#5{%
  \reset@font\fontsize{#1}{#2pt}%
  \fontfamily{#3}\fontseries{#4}\fontshape{#5}%
  \selectfont}%
\fi\endgroup%
\begin{picture}(4431,1540)(751,-2875)
\put(2791,-1681){\makebox(0,0)[lb]{\smash{{\SetFigFont{14}{16.8}{\rmdefault}{\mddefault}{\updefault}{\color[rgb]{0,0,0}(B)}%
}}}}
\put(4186,-1501){\makebox(0,0)[lb]{\smash{{\SetFigFont{12}{14.4}{\rmdefault}{\mddefault}{\updefault}{\color[rgb]{0,0,0}$0$}%
}}}}
\put(4996,-1771){\makebox(0,0)[lb]{\smash{{\SetFigFont{12}{14.4}{\rmdefault}{\mddefault}{\updefault}{\color[rgb]{0,0,0}$z$}%
}}}}
\put(4681,-2131){\makebox(0,0)[lb]{\smash{{\SetFigFont{12}{14.4}{\rmdefault}{\mddefault}{\updefault}{\color[rgb]{0,0,0}$2$}%
}}}}
\put(3466,-1771){\makebox(0,0)[lb]{\smash{{\SetFigFont{12}{14.4}{\rmdefault}{\mddefault}{\updefault}{\color[rgb]{0,0,0}$p-1$}%
}}}}
\put(4591,-1771){\makebox(0,0)[lb]{\smash{{\SetFigFont{12}{14.4}{\rmdefault}{\mddefault}{\updefault}{\color[rgb]{0,0,0}$1$}%
}}}}
\put(1351,-2221){\makebox(0,0)[lb]{\smash{{\SetFigFont{12}{14.4}{\rmdefault}{\mddefault}{\updefault}{\color[rgb]{0,0,0}$1$}%
}}}}
\put(1351,-1681){\makebox(0,0)[lb]{\smash{{\SetFigFont{12}{14.4}{\rmdefault}{\mddefault}{\updefault}{\color[rgb]{0,0,0}$0$}%
}}}}
\put(1351,-2806){\makebox(0,0)[lb]{\smash{{\SetFigFont{12}{14.4}{\rmdefault}{\mddefault}{\updefault}{\color[rgb]{0,0,0}$2$}%
}}}}
\put(766,-1681){\makebox(0,0)[lb]{\smash{{\SetFigFont{14}{16.8}{\rmdefault}{\mddefault}{\updefault}{\color[rgb]{0,0,0}(A)}%
}}}}
\end{picture}%

  \caption{The remaining functions of type (ii) and (iii) in the proof
  of \ref{B0min}}\label{Fig3}
\end{center}
\end{figure}

Case (B): \textit{$f^{p}=\id_{A}$, $f$ consists of a single $p$-cycle and has at
most one fixed point.}

For $f$ we use the notation as in Figure~\ref{Fig3}(B),
$A=\{0,1,\dots,p-1,z\}$. All computation in $\Z_{p}=\{0,1,\dots,p-1\}$
is done modulo $p$. If $f$ has no fixed point, $z$ can be ignored in
all what follows. We have $M_{f}=\{\id,f,f^{2},\dots,f^{p-1},\bc_{0},\bc_{1},\dots,\bc_{p-1},\bc_{z}\}$. Consider the binary mapping $h$ defined by the
following table: 
\begin{align*}
  \begin{array}[t]{c|cc @{\hspace*{2ex}\dots}cc|c||c}
    h&0&1&&p-1&z&\in\Gamma_{M_{f}}\\\hline
   0&0&1&&p-1&z&\id_{A}\\
   1&1&2&&0&z&f\\
   2&2&3&&1&z&f^{2}\\
   \vdots&\vdots&\vdots&&\vdots&\vdots&\vdots\\
   p-1&p-1&0&&p-2&z&f^{p-1}\\\hline
   z&z&z&&z&z&\bc_{z}\\\hline
  \end{array}
\end{align*}
Clearly $h\in M^{*}$ (indicated in the last column). Therefore
(cf.~\ref{M8}) $g:=\Delta h\in \uclose{M_{f}}$ and $g$ is the
permutation $g:x\mapsto 2x$ for $x\in Z_{p}$ and $gz=z$. Note that $0$
is an additional fixed point. First we consider the
case that $p\geq 5$. In the group generated by $g$ there
must exist an element $g'$ of prime order $q$ with $q<p$. Since $p\geq
5$, $g$ has either more than one $q$-cycle or at least two fixed points,
i.e., $g'$ is of type \rmIII$'$. Since $g'\in\Sg{g}\subseteq
\uclose{M_{f}}$ we get (with \ref{B0}) $\uclose{M_{g'}}=M_{g'}\subset
\uclose{M_{f}}$, i.e., $\uclose{M_{f}}$ is not minimal u-closed.

It remains to consider the cases $p=2$ and $p=3$. For $p=3$, we get
$g=(0)(12)(z)$ (in cycle notation) if 
there exists a fixed point $z$ what is a function of type \rmIII$'$,
and we can continue as above with $g'$.
Otherwise we have $g=(0)(12)$.  
For $p=2$ there must exist the fixed
point $z$ (since $|A|\geq 3$) and we have 
$f=(01)(z)$, what is a function of the same form as $g$ in case $p=3$
(up to isomorphism). Thus we can continue with $g$.
Take the function
$h'$ given by the table
\begin{align*}
\begin{array}[b]{c|ccc||c}
  h'&0&1&2&\in\Gamma_{M_{g}}\\\hline
  0&0&0&0&\bc_{0}\\
  1&0&1&2&\id_{A}\\
  2&0&2&1&g\\\hline
\end{array}
\end{align*}
Then $h'\in M_{g}^{*}$ (as indicated in the last column) and therefore $g'':=\Delta h'$ belongs to
$\uclose{M_{g}}\subseteq \uclose{M_{f}}$. But $g''$ is a function of
type \rmI{} ($g'0=0$, $g'2=g'1=1$). Thus, as above, $\uclose{M_{g''}}=M_{g''}\subset
\uclose{M_{f}}$, i.e., $\uclose{M_{f}}$ is not minimal u-closed.
\end{proof}

\begin{remark}\label{B3}
Comparing Theorem~\ref{B0min} with the above mentioned results
from \mbox{\cite{JakPR2016}}, we can conclude that there are monoids
$M\leq A^{A}$ which are characterizable by generalized
quasiorders but not by quasiorders, i.e., we have $M=\End\gQuord M$ but
$M\subsetneqq \End\Quord M$ (namely those $M_{f}$ with $f$ of type
\rmII$'$ or \rmIII$'$ but not of type \rmII{} or \rmIII). With other
words, generalized quasiorders are really more powerful than
quasiorders (or congruences).

For $|A|=3$, \textsc{M. Behrisch} (personal communication) computed
all monoids of the form $\End Q$ for $Q\subseteq \gQuord(A)$ and of
the form $\End Q$ for $Q\subseteq\Quord(A)$, their number is 89 and
71, respectively, among all 699 monoids $M\leq A^{A}$.
\end{remark}

\begin{remark}\label{B1}
Let $A=\Z_{k}=\{0,1,\dots,k-1\}$, $k\geq 2$, and let $\gamma_{k}\in A^{A}$ be the
full cycle $\gamma_{k}=(01\dots k{-}1)$, i.e., $\gamma_{k}(x)=x+1$ (all computation is
done modulo k). Consider the monoid $M_{\gamma_{k}}=\Sg[S_{k}]{\gamma_{k}}\cup
C$. It can be shown (unpublished result)
that for the u-closure
$\uclose{M_{\gamma_{k}}}=\uclose{\Sg[S_{k}]{\gamma_{k}}}$ we need only
congruence relations instead of all generalized quasiorders (cf.\
Theorem~\ref{A3}), i.e., we have  $\uclose{M_{\gamma_{k}}}=\End\Con
M_{\gamma_{k}}$. This closure contains much more elements than
$M_{\gamma_{k}}$ (namely, if $k=p_{1}^{m_{1}}\cdot\ldots\cdot p_{n}^{m_{n}}$ is
the decomposition of $k$ into powers of different primes, then
$|\uclose{M_{\gamma_{k}}}|=
    \sum_{i=1}^{n}p_{i}^{\,p_{i}+p_{i}^{2}+\ldots+p_{i}^{m_{i}}}$ ). In
particular, $M_{\gamma_{k}}$ is not u-closed (what was proved, at
least for prime $k=p$, already
with Part II, Case (B), in the proof of
Theorem~\ref{B0min}). 
\end{remark}

\begin{lattices}\label{B2}
For fixed $A$ and fixed arity $m\in\N_{+}$, the set $\gQuorda[m](A,F)$
of all $m$-ary generalized 
quasiorders of an algebra $(A,F)$ forms a lattice with respect to
inclusion (where one can
restrict $F$ to unary mappings because of \ref{A1b}). All these
lattices together also form a lattice, namely 
\begin{align*}
  \cK^{(m)}_{A}:=\{\gQuorda[m](A,F)\mid F\subseteq A^{A}\}.
\end{align*}
For $m=2$ this lattice was investigated in \cite{JakPR2016} (note $\Quord(A,F)=\gQuorda[2](A,F)$).
Due to the Galois connection $\End-\gQuord$ the
lattice $\cK^{(m)}_{A}$ is dually isomorphic to the lattice of all those
u-closed monoids 
$M\leq A^{A}$ which are endomorphism monoids of $m$-ary generalized
quasiorders.

The ``largest'' lattice $\cK^{(k)}_{A}$ with  $k:=|A|$ is isomorphic to the
lattice of all u-closed monoids. 
 With Theorem~\ref{B0min} we also determined the
maximal elements of this lattice $\cK^{(k)}_{A}$, which are of the form 
$\gQuord M_{f}$ with $f$ satisfying one of the conditions \rmI,
\rmII$'$ or \rmIII$'$.

This $\cK^{(k)}_{A}$ contains all $\cK^{(m)}_{A}$ for $m<k$ via an
order embedding. In fact, for $m<n$, there is an order embedding
$\phi^{m}_{n}:\cK^{(m)}_{A}\hookrightarrow \cK^{(n)}_{A}$ given by
$\phi^{m}_{n}(\gQuorda[m](A,F)):=\gQuorda[n](A,\widehat{F})$ with
$\widehat{F}:=\End\gQuorda[m](A,F)$.

Conversely, there is a surjective order preserving map $\psi^{n}_{m}:\cK^{(n)}_{A}\to\cK^{(m)}_{A}$
given by  $\psi^{n}_{m}(\gQuorda[n](A,F)):=\gQuorda[m](A,F)$. This
mapping is well-defined because $\gQuorda[m](A,F)$ is ``contained'' in
$\gQuorda[n](A,F))$ since $\gQuorda[m](A,F)=\{\rho\in\Rela[m](A)\mid A^{n-m}\times\rho\in\gQuorda[n](A,F)\}$
where

\centerline{$A^{n-m}\times\rho=\{(a_{1},\dots,a_{n-m},b_{1},\dots,b_{m})\mid
a_{1},\dots,a_{m}\in A, (b_{1},\dots,b_{m})\in\rho\}$ }
(it is easy to
see that $A^{n-m}\times\rho$ is a generalized quasiorder if and only
if  $\rho$ is). Thus $\rho\mapsto A^{n-m}\times\rho$ is an order
embedding from $\gQuorda[m](A,F)$ into $\gQuorda[n](A,F)$.

\end{lattices}

\section{Concluding remarks}\label{secD}

An algebra $(A,F)$ is called \New{affine complete} if every function
compatible with all congruence relations
of $(A,F)$ is a polynomial 
function, equivalently (for finite $A$), if $\Pol\Con(A,F)$
 is the
clone $\Sg[clone]{F\cup C}$ generated by $F$ and the constants
$C$. With the notation introduced in  \eqref{M1c} (and due to
Remark~\ref{M2X}) we have:
\begin{align*}
  (A,F)\text{ affine complete}&\iff \Sg[clone]{F\cup C}=\Pol\Con(A,F),\\
& \iff \exists Q\subseteq{\Eq(A)}:\\
& \phantom{\iff .}\Sg[clone]{F\cup C}=M^{*} \text{ for } M:=\End Q.
\end{align*}

Instead of equivalence relations we may now consider other relations
which also satisfy the property $\Xi$
(cf.~\ref{MX}). This leads to the notion \New{generalized quasiorder
  complete}, or \New{$\gQuord$-complete} for short,
which can be defined and characterized as follows:
\begin{align*}
  (A,F)\text{ $\gQuord$-complete}:&\iff \Sg[clone]{F\cup C}=\Pol\gQuord(A,F)\\
& \iff \exists Q\subseteq{\gQuord(A)}:\\
& \phantom{\iff .}\Sg[clone]{F\cup C}=M^{*} \text{ for } M:=\End Q\\
& \iff \exists \text{ u-closed } M\leq A^{A}: \Sg[clone]{F\cup C}=M^{*}.
\end{align*}

As an intermediate step one might introduce \New{$\Quord$-complete} algebras
(replacing $\gQuord$ by $\Quord$ above). 

Clearly, affine completeness implies $\gQuord$-completeness (but not
conversely).  Thus it is natural to ask which algebraic properties of
affine complete algebras remain valid for $\gQuord$-complete algebras.
Moreover, what can be said about varieties generated by $\gQuord$-complete algebras?

We recall that a variety $\mathcal{V}$ is called affine
  complete, if all 
algebras $A\in\mathcal{V}$ are affine complete. Similarly, we can define a
\New{$\gQuord$-complete variety} by the property that all its algebras $A\in
\mathcal{V}$ are $\gQuord$-complete. Hence, by our definition,
$\gQuord$-complete 
varieties can be considered a generalization of the affine complete varieties.
It is known that any affine complete variety is congruence
distributive (see e.g. 
\cite{KaaM1997}). There arises the question what are the properties
of $\gQuord$-complete varieties,  could they be still congruence distributive? In
the paper \cite{KaaM1997} also a characterization of affine complete arithmetical
varieties is established (A variety is called arithmetical, if any
algebra in it is congruence distributive \textsl{and} congruence permutable.) Therefore, it is
meaningful to ask if there exists any characterization for $\gQuord$-complete
arithmetical algebras.

We mention some further topics for research:

- Characterize the u-closed monoids which are already given by their 
quasiorders or congruences (cf. Remarks \ref{B3}, \ref{B1}), i.e.,
monoids $M$ with the property $M=\End\gQuord M=\End\Quord M$ or
$M=\End\gQuord M=\End\Con M$.

- Characterize the Galois closures $\gQuord\End
  Q$, cf. \ref{R1}.

- Investigate the lattices $\cK^{(m)}_{A}$ (Remark
  \ref{B2}) and
  their interrelations.

\vfill

\textsc{Acknowledgement.} 
The research of the first author was supported by the Slovak VEGA grant
1/0152/22. The research of the third author was 
carried out as part of the 2020-1.1.2-PIACI-KFI-2020-00165 ``ERPA''
project -- supported by the National Research Development and Innovation Fund of
Hungary.

\vfill

\section*{Remarks by two of the coauthors}

\it\small

In June 2022, a Honorary colloquium on the occasion of Reinhard
P\"oschel's 75th birthday was held in Dresden. There R. Pöschel
presented a talk containing the basics of this article (\cite{JakPR2022}).
The colloquium was organized by
M. Bodirsky and M. Schneider, who at the same time
informed about a forthcoming topical collection of Algebra Universalis, which
will be dedicated to R. P\"oschel. At that time, the full version of
the presented results was not yet written.

We, the co-authors of the results, somehow also would like to
contribute to this honorary commemoration and therefore here -- because we
cannot submit it to the topical collection -- we use the presentation
of our common results as an opportunity
to express our deep respect and gratitude to Reinhard, for his
inventiveness, creativity, energy, and for his kindness.  For more than 16 years
we both have been working successfully together with Reinhard who was
the initiator of many of our joint works.
Our thanks also go to Martin Schneider for his activities.\\
June 2023\hspace*{\fill} Danica Jakub{\'\i}kov\'a-Studenovsk\'a and S\'andor Radeleczki

\rm

\newpage

\def\cprime{$'$} \def\cprime{$'$}

Danica Jakub{\'\i}kov\'a-Studenovsk\'a:~Institute of
  Mathematics, P.J.~\v{S}af\'arik University,~Ko\v sice,\\ \texttt{danica.studenovska@upjs.sk},

Reinhard P{\"o}schel: Institute of Algebra, Technische Universität Dresden,\\ \texttt{reinhard.poeschel@tu-dresden.de},

S\'andor Radeleczki: Institute of Mathematics, University of
    Miskolc,\\ \texttt{sandor.radeleczki@uni-miskolc.hu}.



\end{document}